\documentclass[11pt,a4paper]{amsart}

\usepackage[T1]{fontenc}
\usepackage[utf8]{inputenc}
\usepackage{lmodern}
\usepackage{amsmath,amssymb,amsthm,mathtools}
\usepackage{microtype}
\usepackage{enumitem}
\usepackage{xcolor}
\usepackage{tikz}
\usepackage{hyperref}
\hypersetup{hidelinks}
\theoremstyle{plain}
\newtheorem{theorem}{Theorem}[section]
\newtheorem{proposition}[theorem]{Proposition}
\newtheorem{lemma}[theorem]{Lemma}
\newtheorem{corollary}[theorem]{Corollary}

\theoremstyle{definition}
\newtheorem{remark}[theorem]{Remark}

\newtheorem{question}[theorem]{Question}

\newcommand{\R}{\mathbb{R}}

\newcommand{\Q}{\mathbb{Q}}
\newcommand{\OO}{\mathcal{O}}
\newcommand{\CC}{\mathcal{C}}
\newcommand{\Spec}{\operatorname{Spec}}
\newcommand{\dist}{\operatorname{dist}}
\newcommand{\cc}{\operatorname{cc}}
\newcommand{\dhc}{d_{H^c}}

\title[Generic failure of uniform spectral separation]
{Generic failure of uniform separation in planar Dirichlet spectra}
\author[V. Boulard]{Vincent Boulard}
\address{CERMICS, CNRS, \'Ecole nationale des ponts et chauss\'ees, Institut Polytechnique de Paris, Marne-la-Vall\'ee, France}
\email{vincent.boulard@enpc.fr}
\subjclass[2020]{35P20, 35P15, 35B20, 54E52.}
\keywords{Dirichlet Laplacian, uniform spectral separation, eigenvalue gaps, generic planar domains, complementary-Hausdorff topology, shape perturbation, Baire category}

\begin{document}

\begin{abstract}
{Can a bounded planar domain have a simple Dirichlet spectrum with
uniformly separated consecutive eigenvalues? Dimension two is critical:
Weyl's law permits both uniform separation and arbitrarily small gaps. We
prove that uniform separation is nevertheless exceptional in a natural
rough-domain setting, even after multiplicities are removed.}
{Let $D\subset\R^2$ be a bounded domain and, for $\ell\geq1$, let $\CC_\ell(D)$ be the
space of nonempty connected open sets $\Omega\subset D$ such that
$\overline D\setminus\Omega$ has at most $\ell$ connected components,
endowed with the complementary-Hausdorff topology. We prove that
$\CC_\ell(D)$ is completely metrizable and Baire, and that smooth domains
are dense in it. If
$0<\nu_1(\Omega)<\nu_2(\Omega)<\cdots$ are the distinct Dirichlet
eigenvalues, our main result states that}
\[
{\left\{\Omega\in\CC_\ell(D):
\inf_{m\geq1}\bigl(\nu_{m+1}(\Omega)-\nu_m(\Omega)\bigr)=0\right\}}
\]
{is residual. This statement requires no simplicity assumption.
Combining it with our transfer of Micheletti's classical generic-simplicity
theorem to $\CC_\ell(D)$ shows that a generic domain has simple spectrum and
consecutive gaps with zero lower limit. The proof uses \v{S}ver\'ak's planar
spectral continuity theorem and a local surgery that implants an arbitrarily
high pair of close consecutive distinct eigenvalues.}
\end{abstract}

\maketitle

\section{Introduction}\label{sec:intro}

{Let $\Omega\subset\R^d$ be a bounded domain and write}
\[
 0<\lambda_1(\Omega)\leq\lambda_2(\Omega)\leq\cdots
\]
{for its Dirichlet eigenvalues, repeated according to multiplicity.
{The uniform-gap condition for the full spectrum is}}
\begin{equation}\label{eq:full-uniform-gap}
 {\inf_{j\geq1}
 \bigl(\lambda_{j+1}(\Omega)-\lambda_j(\Omega)\bigr)>0.}
\end{equation}
{Since multiplicities make this condition fail trivially, let}
\[
 0<\nu_1(\Omega)<\nu_2(\Omega)<\cdots
\]
{enumerate the distinct spectral values. They are uniformly separated if}
\begin{equation}\label{eq:distinct-uniform-gap}
 {\inf_{m\geq1}
 \bigl(\nu_{m+1}(\Omega)-\nu_m(\Omega)\bigr)>0.}
\end{equation}
{We prove that \eqref{eq:distinct-uniform-gap}, and hence
\eqref{eq:full-uniform-gap}, fails generically.}

{The Weyl law singles out dimension two. In dimension $d$, it gives}
\begin{equation}\label{eq:weyl}
 {\lambda_j(\Omega)\sim 4\pi^2
 \left(\frac{j}{\omega_d|\Omega|}\right)^{2/d},
 \qquad j\longrightarrow\infty,}
\end{equation}
{where $\omega_d$ is the unit-ball volume \cite{Weyl1912}. On an
interval the gaps grow linearly. If $d\geq3$ and the spectrum is simple, the
average}
$\frac1j\sum_{k=1}^{j}\bigl(\lambda_{k+1}(\Omega)-\lambda_k(\Omega)\bigr)
=\bigl(\lambda_{j+1}(\Omega)-\lambda_1(\Omega)\bigr)/j$ tends to zero by
\eqref{eq:weyl}, {so}
\[
\inf_{m\geq1}\bigl(\nu_{m+1}(\Omega)-\nu_m(\Omega)\bigr)=0.
\]
{In dimension two, however,}
\begin{equation*}
 {\lambda_j(\Omega)\sim\frac{4\pi}{|\Omega|}j.}
\end{equation*}
{The mean spacing has a positive finite scale, so the Weyl law permits
both uniform separation and arbitrarily small gaps. Thus $d=2$ is critical
for this problem in the simple-spectrum regime.}

{Rectangles illustrate both the difficulty and the distinction between
the two conditions. For $\Omega=(0,a)\times(0,b)$, the eigenvalues are}
\[
 {\pi^2\left(\frac{m^2}{a^2}+\frac{n^2}{b^2}\right),}
 \qquad m,n\geq1.
\]
{If $a^2/b^2\in\mathbb Q$, the distinct eigenvalues lie in a fixed
lattice, so \eqref{eq:distinct-uniform-gap} holds but multiplicities rule out
\eqref{eq:full-uniform-gap}. If $a^2/b^2$ is irrational, the spectrum is
simple and its smallest gap among the first $N$ eigenvalues is
$O(N^{-1/2})$ \cite[Proposition~2.2]{BlomerBourgainRadziwillRudnick2016}.
The essentially Poissonian $N^{-1}$ scale, predicted by Berry--Tabor
statistics \cite{BerryTabor1977}, requires additional arithmetic hypotheses
in that work. Thus no rectangle satisfies \eqref{eq:full-uniform-gap}. Our
novelty is the Baire genericity of small gaps in an infinite-dimensional
space of rough domains, not their first occurrence.}

{This high-frequency question differs from the fundamental-gap problem.
For convex domains, Andrews and Clutterbuck proved the sharp bound
$\lambda_2-\lambda_1\geq3\pi^2/\operatorname{diam}(\Omega)^2$
\cite{AndrewsClutterbuck2011}, with no analogous geometric bound available
uniformly at high frequency. Payne--P\'olya--Weinberger estimates instead
control consecutive eigenvalue ratios \cite{PaynePolyaWeinberger1956}, not
uniform additive gaps. Such gaps underlie Ingham inequalities
\cite{Ingham1936} and many control arguments for conservative systems. For
parabolic equations they enter the moment method through biorthogonal
families for $(e^{-\lambda_jt})_j$, beginning with Fattorini and Russell
\cite{FattoriniRussell1971}. Cannarsa, Martinez and Vancostenoble distinguish
uniform from asymptotic gaps \cite{CannarsaMartinezVancostenoble2020}.}

{Uniform spectral separation also appears in Schr\"odinger control. In
their study of linearized dipole-controlled equations, Beauchard, Chitour,
Kateb and Long used it in trigonometric moment methods and asked whether a
{planar domain with $C^1$ boundary} can satisfy
\eqref{eq:full-uniform-gap} \cite{BeauchardChitourKatebLong2009}. See also
Zuazua \cite{Zuazua2003} on Ingham inequalities and spectral gaps in
Schr\"odinger controllability. The present theorem does not settle this
regular-domain existence question, but shows that uniform separation is
exceptional in the complementary-Hausdorff spaces introduced below.}

{We seek the stronger Baire-category conclusion that even the distinct
gaps have zero infimum. Its proof requires closing thin walls and chambers,
which is not a small perturbation in smooth domain topologies such as
Micheletti's \cite{Micheletti1972,MichelettiMetric1972}. We therefore use
\v{S}ver\'ak's complementary-Hausdorff framework. Fix a bounded domain
$D\subset\R^2$ and, for $\ell\geq1$, set}
\begin{equation*}
 \OO_\ell(D):=
 \bigl\{\Omega\subset D\text{ open}:\overline D\setminus\Omega
 \text{ has at most }\ell\text{ connected components}\bigr\},
\end{equation*}
including the empty set, and the subspace of connected domains
\begin{equation*}
 \CC_\ell(D):=
 \bigl\{\Omega\in\OO_\ell(D):\Omega\neq\varnothing
 \text{ and }\Omega\text{ is connected}\bigr\}.
\end{equation*}
For $\Omega\in\OO_\ell(D)$, set $K_\Omega:=\overline D\setminus\Omega$ and define
\begin{equation}\label{eq:dhc}
 \dhc(\Omega_1,\Omega_2):=d_H(K_{\Omega_1},K_{\Omega_2}).
\end{equation}
{The space $\OO_\ell(D)$ is compact. In the plane, complementary-Hausdorff
convergence in this class implies $\gamma$-convergence and hence convergence
of every Dirichlet eigenvalue \cite{Sverak1993}. See Bucur--Buttazzo
\cite{BucurButtazzo2005} and Henrot--Pierre \cite{HenrotPierre2018}, as well
as Bucur--Trebeschi \cite{BucurTrebeschi1998} for nonlinear extensions under
capacitary assumptions. Thus the topology accommodates closed walls while
preserving the spectrum. However, $\CC_\ell(D)$ is not closed in
$\OO_\ell(D)$ because a thin neck may close in the limit. Since residual
sets need not be dense outside Baire spaces, we first establish the required
topological framework.}

\begin{theorem}[Topological and spectral framework]\label{thm:framework-intro}
{Let $D\subset\R^2$ be a bounded domain and let $\ell\geq1$. Then the
following assertions hold.}
\begin{enumerate}[label=\textup{(\roman*)}]
 \item {The space $\CC_\ell(D)$ is a $G_\delta$ subset of the compact
 space $\OO_\ell(D)$. In particular, $\CC_\ell(D)$ is completely metrizable
 and Baire.}
 \item {Connected domains with $C^\infty$ boundary are dense in
 $\CC_\ell(D)$ for the complementary-Hausdorff topology.}
 \item {The set}
 \begin{equation}\label{eq:simple-set}
  {\mathcal S_\ell:=
  \bigl\{\Omega\in\CC_\ell(D):
  \lambda_j(\Omega)<\lambda_{j+1}(\Omega)
  \text{ for every }j\geq1\bigr\}}
 \end{equation}
 {is residual in $\CC_\ell(D)$.}
\end{enumerate}
\end{theorem}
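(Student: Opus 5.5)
We treat the three assertions in order. The compactness of $\OO_\ell(D)$ (Hausdorff limits of compacta with at most $\ell$ components keep at most $\ell$ components, and we include $\varnothing$, i.e.\ $K=\overline D$) is taken as given. The main task is therefore to write $\CC_\ell(D)$ as a countable intersection of open sets.

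For (i), we use compactness and connectedness. Fix a countable dense set of points $x_k\in D$ and a sequence of compact exhaustions. The key observation is that a nonempty open $\Omega\subset D$ fails to be connected exactly when it can be split by a compact piece of complement. We express connectedness as follows: for every pair of points $x,y\in\Omega$ and every $\varepsilon>0$ with $B(x,\varepsilon),B(y,\varepsilon)\subset\Omega$, there is a polygonal path from $x$ to $y$ at positive distance from $K_\Omega$. The condition ``there exists a polygonal path from $x$ to $y$ at distance $>\delta$ from $K$'' is open in $K$ for the Hausdorff metric, since small Hausdorff perturbations of $K$ preserve a positive distance. So we define, for rational radii $r$ and indices $k,k'$,
\[
U_{k,k',r}:=\{\Omega:\ \operatorname{dist}(x_k,K_\Omega)\le r \text{ or } \operatorname{dist}(x_{k'},K_\Omega)\le r \text{ or } x_k,x_{k'} \text{ are joined in }\Omega\}.
\]
We adjust the non-strict/strict inequalities so that each $U_{k,k',r}$ is open. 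The condition $\operatorname{dist}(x_k,K)\le r$ is closed, so instead we use $<r$ for the distance clause and $>r/2$ for the path clause, and intersect over the countable family. Nonemptiness of $\Omega$ is the open condition $K_\Omega\neq\overline D$, which is an $F_\sigma$-complement issue handled by $\bigcup_k\{\operatorname{dist}(x_k,K)>1/n\}$, an open set. Checking that this intersection is exactly $\CC_\ell(D)$ is the delicate bookkeeping step. Alexandrov's theorem then gives complete metrizability, and the Baire property follows.

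For (ii), we take $\Omega\in\CC_\ell(D)$ and $\varepsilon>0$. We cover $K_\Omega$ by a finite union of small closed squares from a grid of mesh $\ll\varepsilon$; call the union $\widetilde K$. This set may have more than $\ell$ components, so we connect the squares meeting each component of $K_\Omega$ by thin grid strips inside the $\varepsilon$-neighbourhood of that component. This is possible because each component of $K_\Omega$ is connected, so its $\varepsilon$-neighbourhood is connected. The result has at most $\ell$ components and is $\varepsilon$-Hausdorff close to $K_\Omega$. We then smooth the corners by a slight outward rounding, and we keep $\overline D\setminus\widetilde K$ connected by choosing the mesh finer than the width of paths connecting a dense finite set of points of $\Omega$. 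A large compact subdomain of $\Omega$ is polygonally path connected, so this choice works, and points outside are within $\varepsilon$ of $K$. The component of $D$ near $\partial D$ is handled the same way, since $\partial D\subset K_\Omega$. The outcome has $C^\infty$ boundary.

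For (iii), we write $\mathcal S_\ell=\bigcap_j\{\lambda_j<\lambda_{j+1}\}$. Each set in this intersection is open by Šverák's continuity of eigenvalues. For density, we use (ii) to approximate by a smooth domain, and then Micheletti's theorem provides an arbitrarily small smooth perturbation, which is small in $\dhc$, making $\lambda_1,\dots,\lambda_{j+1}$ simple. I expect the main obstacle to be (i), specifically making the path characterization open while excluding limits where a neck pinches.
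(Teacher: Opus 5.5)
\textbf{The gap is in part (i).} The characterization of $\CC_\ell(D)$ you give there fails as written. Consider your sets $U_{k,k',r}$, defined by ``$\dist(x_k,K_\Omega)<r$ or $\dist(x_{k'},K_\Omega)<r$ or some path from $x_k$ to $x_{k'}$ stays at distance $>r/2$ from $K_\Omega$''. They are open, but their intersection is strictly smaller than $\CC_\ell(D)$. Take $\Omega$ to be two unit disks joined by a corridor of width $w$, put $x_k,x_{k'}$ near the two centres, and pick a rational $r\in(w,1/2)$. Both points are at distance $\geq r$ from $K_\Omega$, yet every path between them enters the corridor, where the distance to $K_\Omega$ is at most $w/2<r/2$. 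So this connected domain is excluded. The neck you worried about enters precisely through tying the path margin to $r$, and the ``bookkeeping'' you postponed cannot be completed.

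The repair is to drop the margin. The set $\mathcal P_{k,k'}$ of those $\Omega$ in which $x_k,x_{k'}$ lie in the same component is already open. Indeed, a connecting polygonal path $\Gamma\subset\Omega$ is compact, so $\delta:=\dist(\Gamma,K_\Omega)>0$, and $\dhc(\Omega',\Omega)<\delta/2$ forces $\Gamma\subset\Omega'$. Here $\delta$ depends on $\Omega$ and may be far smaller than $r$. Then $\{\dist(x_k,K)<r\}\cup\{\dist(x_{k'},K)<r\}\cup\mathcal P_{k,k'}$ is open. Its intersection over $k,k'$ and rational $r>0$, together with $\Omega\neq\varnothing$, is exactly $\CC_\ell(D)$: a disconnected $\Omega$ is detected by choosing $x_k,x_{k'}$ in different components and $r$ below both distances. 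The paper does essentially this. It keeps the closed clause $p\notin\Omega$ and uses that in a metric space closed sets, and hence finite unions of closed and open sets, are $G_\delta$.

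\textbf{Parts (ii) and (iii).} For (iii) you follow the paper: \v{S}ver\'ak gives openness of each $\{\lambda_j<\lambda_{j+1}\}$, and (ii) plus Micheletti gives density. You should add that the Micheletti perturbation is kept diffeomorphic and $\Subset D$, so it stays in $\CC_\ell(D)$.

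For (ii) your route is genuinely different. You thicken $K_\Omega$ from outside by grid squares. The paper instead takes a regular superlevel set of a cutoff equal to $1$ on a compact connected core $L_\varepsilon\supset\{\dist(\cdot,K)\geq\varepsilon\}$, then fills the artificial holes. Your route has the merit that component control is automatic: the union of closed squares meeting a connected component of $K_\Omega$ is already connected. So the connecting strips are unnecessary, and strips placed anywhere in the $\varepsilon$-neighbourhood could even cut the core.

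Two points must still be made explicit.
\begin{enumerate}
\item $D\setminus\widetilde K$ need not be connected for any mesh, since pockets behind arbitrarily thin necks can survive. You must take the component containing a connected core $L$, with mesh below $\dist(L,K_\Omega)/\sqrt2$, and absorb the other components into the complement. Each of them touches $\widetilde K$, so no new complementary components appear.
\item The grid boundary may have checkerboard vertices where it is not a curve. ``Rounding corners'' must resolve these first, for instance by slightly thickening $\widetilde K$. The paper's regular-value construction avoids this issue entirely.
\end{enumerate}
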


{The three assertions have distinct statuses. Items~\textup{(i)} and
\textup{(ii)} are the new rough-topology contributions. They supply the
Baire setting and connect it to smooth perturbations. Micheletti's generic
simplicity theorem \cite{Micheletti1972,MichelettiMetric1972} and Teytel's
codimension-two analysis \cite{Teytel1999} concern smooth parameter spaces,
locally within a fixed diffeomorphism class. They do not provide the
$G_\delta$ description, smooth approximation, or transfer to
$\CC_\ell(D)$. This transfer is the new content of~\textup{(iii)}:
spectral continuity gives openness and~\textup{(ii)} gives density. Related
generic-simplicity results include Albert and Uhlenbeck
\cite{Albert1975,Uhlenbeck1976}, Henry \cite{Henry2005}, and Privat and
Sigalotti \cite{PrivatSigalotti2010}. We can now state the main result.}

\begin{theorem}[Generic failure of uniform separation]\label{thm:main}
{Let $D\subset\R^2$ be a bounded domain and let $\ell\geq1$. Then}
\[
 {\mathcal G_\ell:=
 \left\{\Omega\in\CC_\ell(D):
 \inf_{m\geq1}
 \bigl(\nu_{m+1}(\Omega)-\nu_m(\Omega)\bigr)=0\right\}}
\]
{is a dense $G_\delta$ set, hence is residual in $\CC_\ell(D)$ for the complementary-Hausdorff topology.}
\end{theorem}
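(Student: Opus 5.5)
We write $\mathcal G_\ell$ as a countable intersection of open dense sets and then apply Theorem~\ref{thm:framework-intro}(i). For $k,N\geq1$ set
\[
 U_{k,N}:=\bigl\{\Omega\in\CC_\ell(D):\ \exists\, m\ \text{with}\ \nu_m(\Omega)>N,\ 0<\nu_{m+1}(\Omega)-\nu_m(\Omega)<1/k\bigr\}.
\]
If $\Omega\in\bigcap_{k,N}U_{k,N}$, then already $\bigcap_k U_{k,1}$ forces $\inf_m(\nu_{m+1}-\nu_m)=0$, so $\bigcap_k U_{k,1}\subset\mathcal G_\ell$. Conversely, if the infimum of the gaps is zero and the spectrum is discrete and infinite, only finitely many gaps lie below any fixed $N$, and each gap is positive. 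Hence small gaps occur at arbitrarily high levels, and $\mathcal G_\ell=\bigcap_{k}U_{k,1}=\bigcap_{k,N}U_{k,N}$. The $G_\delta$ property and density then follow once each $U_{k,N}$ is shown to be open and dense.

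\emph{Openness.} Phrase membership in terms of the counting function. $\Omega\in U_{k,N}$ if and only if there exist $N<a<b$ with $b-a<1/k$, $a,b\in\Spec(\Omega)$ and no eigenvalue in $(a,b)$. This condition is not obviously open, since an eigenvalue might split into the gap. Instead I use the equivalent formulation: there is an interval $I=[\alpha,\beta]$ with $\alpha>N$, $\beta-\alpha<1/k$, $\alpha,\beta\notin\Spec(\Omega)$, such that $\Spec(\Omega)\cap I$ contains at least two distinct values. By Šverák's theorem every $\lambda_j$ depends continuously on $\Omega\in\CC_\ell(D)$. The eigenvalues in $I$ are finitely many indices $\lambda_p,\dots,\lambda_q$, two of them distinct, and $\lambda_{p-1}<\alpha$, $\lambda_{q+1}>\beta$ strictly. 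All of these strict inequalities persist in a neighbourhood. Any two distinct values in $I$ give consecutive distinct values at distance less than $1/k$, all above $N$, so the neighbourhood lies in $U_{k,N}$.

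\emph{Density.} By Theorem~\ref{thm:framework-intro}(ii) it suffices to approximate a smooth connected domain $\Omega_0$. Take a small disc $B\subset\Omega_0$ of radius $r$. The surgery plants in $B$ a thin closed wall (a circle arc) that nearly encloses a small chamber $Q$ of the right size, leaving a tiny opening of width $\varepsilon$. The chamber is a rectangle, or a pair of disjoint congruent small chambers, chosen so that $Q$ has a multiple, or two very close, eigenvalues $\mu<\mu'$ above $N$, with $\mu'-\mu<1/(2k)$. Rectangles with nearly rational aspect ratio give this at any height. Adding the wall changes $\overline D\setminus\Omega$ by a connected set attached to nothing, which would raise the component count. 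So the wall is attached to $\partial\Omega_0$ through a thin segment running out of $B$, which keeps $\Omega\in\CC_\ell(D)$.

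As $\varepsilon\to0$, the domains converge in $\dhc$ to the disconnected limit $(\Omega_0\setminus\text{wall})\sqcup Q$. This limit is in $\OO_\ell(D)$, so Šverák's theorem applies, and its spectrum is the union of the two spectra. To keep the two values of $Q$ distinct and isolated, I use a one-parameter deformation of $Q$ (its aspect ratio) to move $\mu,\mu'$ off $\Spec(\Omega_0\setminus\text{wall})$ while preserving $0<\mu'-\mu<1/(2k)$. Then the limit has a cluster in a small interval $I$ with two distinct values and endpoints outside the spectrum. The openness argument, applied in $\OO_\ell(D)$, gives such a cluster for the connected domain for small $\varepsilon$. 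The thin wall itself has small Hausdorff displacement when $r$ is small, so the new domain is $\dhc$-close to $\Omega_0$.

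The main obstacle is this density surgery. Three points need care: keeping the component count at most $\ell$, ensuring the two implanted values remain distinct and not coincident with the rest of the spectrum, and checking Hausdorff proximity of the complements. I expect to handle genericity of the chamber via the parameter choice above.
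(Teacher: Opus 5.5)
Your proposal is correct in substance and has the same architecture as the paper: open sets defined by a small positive gap above a threshold, openness from \v{S}ver\'ak's continuity of each $\lambda_j$, and density by a nearly closed chamber attached to the complement, followed by passage to the disconnected limit. Your density step, however, is genuinely simpler.

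The paper thresholds by index and asks for a consecutive pair $\lambda_j<\lambda_{j+1}$. To extract it from the limit $A_{t,s}$, Proposition~\ref{prop:tuning} needs two disk chambers, the monotone family $M_t$ of Lemma~\ref{lem:geometry}(iv), a countable exceptional set of radii, and the bound $\lambda_N(M_t)\leq\lambda_N(B_*)$. Together these make $\mu(t),\mu(s)$ simple consecutive eigenvalues at an index $j\geq N$.

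You threshold by value instead. You observe that membership only requires two \emph{distinct} eigenvalues in a window of length $<1/k$ above $N$, since such a window always contains a consecutive distinct pair. Under this criterion, coincidences between the chamber's eigenvalues and the spectrum of the outer part are harmless. A single small nearly square rectangle $(0,a)\times(0,b)$ with $b\neq a$ close to $a$ then suffices. Its $(1,2)$ and $(2,1)$ modes are distinct eigenvalues differing by $3\pi^2|a^{-2}-b^{-2}|$, which is as small as you like while both lie far above $N$.

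The same relaxation (indices $N\leq i<i'$ with $0<\lambda_{i'}-\lambda_i<1/k$) would also have spared the paper its tuning. What the paper's construction buys instead is an explicit simple consecutive pair at a controlled index.

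A few details must be repaired.
\begin{itemize}
\item Since $d_H(K,K\cup W)=\max_{w\in W}\dist(w,K)$, the displacement is not small merely because $r$ is small. You must place $B$ within $\eta/2$ of $\partial\Omega_0$ and let the connecting segment realize the distance to $K$, so that it is short and meets $K$ at a single point, as in the proof of Lemma~\ref{lem:geometry}.
\item You assert without proof that the open-door domain is connected. The segment together with the broken rectangle boundary is a finite tree meeting $\mathbb S^2\setminus\Omega_0$ only at its root, so Lemma~\ref{lem:attachment} (or an elementary substitute) is what gives this.
\item The \emph{multiple eigenvalue} and \emph{congruent chambers} options yield a zero gap and must be discarded.
\item The step moving $\mu,\mu'$ off the outer spectrum should simply be dropped. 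It is unnecessary under your own criterion, and as written it is unjustified, because the outer domain changes with the aspect ratio.
\item The preliminary reduction to a smooth $\Omega_0$ is harmless but also unnecessary.
\end{itemize}
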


{Neither the statement nor the proof assumes simplicity. We work with
the eigenvalues $\lambda_j$, counted with multiplicity, because each
fixed-index map is continuous. The inequality
$\lambda_j<\lambda_{j+1}$ marks exactly a gap between consecutive distinct
values. If $\mathcal U_{k,N}$ denotes the domains having such a gap smaller
than $1/k$ at some $j\geq N$, {Section~\ref{sec:baire} proves that}}
\[
{\mathcal G_\ell
=\bigcap_{k\geq1}\bigcap_{N\geq1}\mathcal U_{k,N}.}
\]
{It therefore suffices to prove that every $\mathcal U_{k,N}$ is open
and dense. Spectral continuity gives openness, and the following surgery
gives density.}

{In the local ``double-lock'' surgery, a finite tree attaches two
circular walls to one complementary component of an arbitrary
$\Omega\in\CC_\ell(D)$. Open doors keep the domain connected. Closing them
isolates two disks whose radii are tuned so that their first eigenvalues form
a distinct, consecutive, arbitrarily close pair at arbitrarily high index,
see Lemma~\ref{lem:attachment} and Proposition~\ref{prop:tuning}.
\v{S}ver\'ak's theorem transfers this positive small gap to the connected
open-door domains.}

\begin{corollary}[Simple-spectrum refinement]\label{cor:simple-refinement}
{The set}
\[
 {\left\{\Omega\in\mathcal S_\ell:
 {\inf_{j\geq1}}
 \bigl(\lambda_{j+1}(\Omega)-\lambda_j(\Omega)\bigr)=0
 \right\}}
\]
{is residual in $\CC_\ell(D)$. Thus the failure of uniform separation
persists generically after all multiplicities have been excluded.}
\end{corollary}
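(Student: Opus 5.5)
The corollary follows from intersecting two residual sets: $\mathcal S_\ell$ from Theorem~\ref{thm:framework-intro}\textup{(iii)} and $\mathcal G_\ell$ from Theorem~\ref{thm:main}. Since $\CC_\ell(D)$ is Baire by Theorem~\ref{thm:framework-intro}\textup{(i)}, a countable intersection of residual sets is again residual. In particular $\mathcal S_\ell\cap\mathcal G_\ell$ is residual, and hence dense. It then suffices to show that $\mathcal S_\ell\cap\mathcal G_\ell$ is contained in the set displayed in Corollary~\ref{cor:simple-refinement}; a set containing a residual set is itself residual.

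For this inclusion, take $\Omega\in\mathcal S_\ell\cap\mathcal G_\ell$. Because $\Omega\in\mathcal S_\ell$, every eigenvalue is simple: $\lambda_j(\Omega)<\lambda_{j+1}(\Omega)$ for all $j$. Hence the distinct values coincide with the eigenvalues counted with multiplicity, that is, $\nu_m(\Omega)=\lambda_m(\Omega)$ for every $m\geq1$. This is a short induction on $m$. The base case uses that $\nu_1=\lambda_1$. For the inductive step, $\nu_{m+1}$ is the least spectral value strictly exceeding $\nu_m=\lambda_m$, and by simplicity that value is $\lambda_{m+1}$. Consequently
\[
\inf_{j\geq1}\bigl(\lambda_{j+1}(\Omega)-\lambda_j(\Omega)\bigr)
=\inf_{m\geq1}\bigl(\nu_{m+1}(\Omega)-\nu_m(\Omega)\bigr)=0,
\]
where the final equality is the definition of $\Omega\in\mathcal G_\ell$. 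So $\Omega$ lies in the set of the corollary, and residuality follows.

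There is no real obstacle here, since the substance is in Theorems~\ref{thm:framework-intro} and~\ref{thm:main}. The one point to check is that both residuality statements refer to the same space and topology, namely $\CC_\ell(D)$ with $\dhc$. This holds as stated, so no transfer between topologies is needed.
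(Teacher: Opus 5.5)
Your proposal is correct and matches the paper's proof: intersect the residual sets $\mathcal S_\ell$ and $\mathcal G_\ell$, then use simplicity to get $\nu_m(\Omega)=\lambda_m(\Omega)$, so the infimum of the gaps $\lambda_{j+1}-\lambda_j$ equals the infimum of the distinct gaps, which is zero by the definition of $\mathcal G_\ell$. Reading off the infimum directly is slightly cleaner than the paper's passage through the $\liminf$; note also that a countable intersection of residual sets is residual in any space, and the Baire property is needed only for density.
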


{Thus domains satisfying \eqref{eq:full-uniform-gap} are meagre in
$\CC_\ell(D)$. Theorem~\ref{thm:main} gives arbitrarily small distinct gaps
without simplicity, while intersecting $\mathcal G_\ell$ with the independent
generic-simplicity set gives Corollary~\ref{cor:simple-refinement}.}

{The argument also yields a finite-scale consequence for smooth domains.
For every fixed $k,N\geq1$, the openness and density of $\mathcal U_{k,N}$,
together with Theorem~\ref{thm:framework-intro}\textup{(ii)}, imply that
smooth domains in $\mathcal U_{k,N}$ are dense in $\CC_\ell(D)$. Thus every
complementary-Hausdorff neighborhood contains a smooth domain with a positive
gap smaller than $1/k$ at some index $j\geq N$. The smooth domain may depend
on $k$ and $N$, so this does not give a dense set of smooth domains violating
\eqref{eq:full-uniform-gap}. A Baire-generic conclusion in a Micheletti
$C^m$ topology would require
a different density mechanism. Indeed, the countable intersection defining
$\mathcal G_\ell$ cannot be recovered from smooth density alone, and the
chamber-forming perturbations, although $\dhc$-small, are not $C^m$-small for
$m\geq1$, even when constructed with smooth boundaries.}

{Several approximation results are related to
Theorem~\ref{thm:framework-intro}. Briani, Buttazzo and Prinari obtained
interior Lipschitz approximations of planar finite-perimeter sets, preserving
an upper bound on bounded complementary components and giving
complementary-Hausdorff convergence \cite{BrianiButtazzoPrinari2022}.
Schmidt proved strict smooth interior approximation when
$\mathcal H^1(\partial\Omega)=P(\Omega)$ \cite{Schmidt2015}, and Ball and
Zarnescu gave topology-preserving $C^\infty$ approximations of $C^0$ domains
\cite{BallZarnescu2017}. Proposition~\ref{prop:smooth-density}, tailored to
our category argument, assumes neither finite perimeter nor boundary
regularity, preserves connectedness and the bound on complementary
components, and gives quantitative $C^\infty$ approximation in
$\dhc$.}

{The surgery also recalls dumbbell domains and spectral stability under
domain perturbation \cite{RauchTaylor1975,Arrieta1995,Taylor2013}. Hillairet
and Judge used shrinking slits to prove generic simplicity for multiply
connected polygons \cite{HillairetJudge2010}. Here two nearly closed walls
create tunable Dirichlet chambers and place a close pair at arbitrarily high
index near every domain in $\CC_\ell(D)$. Colin de Verdi\`ere's tunnelling
construction \cite{ColinDeVerdiere1987} realizes any strictly increasing
finite list beginning at $0$ as the initial Neumann spectrum of a suitable
smooth simply connected planar domain, using chambers, narrow passages, and
a weighted graph Laplacian. Our construction is instead Dirichlet and local
near a prescribed domain, attaches the walls to its complement without
increasing the number of complementary components, and preserves
connectedness for every positive door width.}

{Section~\ref{sec:framework} proves the framework theorem,
Section~\ref{sec:surgery} constructs and tunes the double lock,
Section~\ref{sec:baire} proves the main theorem and corollary, and
Section~\ref{sec:questions} discusses the remaining exceptional uniform-gap
problem.}

\section{A Baire space of rough planar domains}\label{sec:framework}

Throughout the paper, eigenvalues are repeated according to multiplicity. For every nonempty open set $\Omega\subset D$, the Dirichlet Laplacian is the selfadjoint operator associated with the quadratic form
\[
 u\longmapsto\int_\Omega|\nabla u|^2,
 \qquad u\in H_0^1(\Omega).
\]
Since $\Omega\subset D$ and $D$ is bounded, its resolvent is compact.

The space of nonempty compact subsets of $\overline D$ is compact for the Hausdorff distance. Moreover, a Hausdorff limit of compact sets having at most $\ell$ connected components still has at most $\ell$ connected components. It follows that $\OO_\ell(D)$ is a compact metric space for \eqref{eq:dhc}, see \cite[Section~2.2]{HenrotPierre2018}. 

The main analytic input is the following result of \v{S}ver\'ak, see \cite{Sverak1993} or \cite[Theorem~3.4.14]{HenrotPierre2018}.

\begin{proposition}[\v{S}ver\'ak]\label{prop:sverak}
Let $(\Omega_n)_n\subset\OO_\ell(D)$ and $\Omega\in\OO_\ell(D)\setminus\{\varnothing\}$. If
\[
 \dhc(\Omega_n,\Omega)\longrightarrow0,
\]
then $\Omega_n$ $\gamma$-converges to $\Omega$. In particular, for every $j\geq1$,
\[
 \lambda_j(\Omega_n)\longrightarrow\lambda_j(\Omega).
\]
\end{proposition}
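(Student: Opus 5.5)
The plan is to prove $\gamma$-convergence in its Mosco form. Extend functions by zero, so that $H_0^1(\Omega_n)$ and $H_0^1(\Omega)$ are closed subspaces of $H_0^1(D)$. Then $\Omega_n\to\Omega$ in the $\gamma$ sense if and only if two conditions hold:
\begin{enumerate}[label=\textup{(M\arabic*)}]
\item every $u\in H_0^1(\Omega)$ is the strong limit of some $u_n\in H_0^1(\Omega_n)$;
\item whenever $u_{n_k}\in H_0^1(\Omega_{n_k})$ converges weakly in $H_0^1(D)$ to $u$, the limit $u$ lies in $H_0^1(\Omega)$.
\end{enumerate}
Once Mosco convergence holds, the resolvents $R_{\Omega_n}$, viewed as operators on $L^2(D)$, converge strongly. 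The embedding $H_0^1(D)\hookrightarrow L^2(D)$ is compact and the resolvents are uniformly bounded from $L^2(D)$ to $H_0^1(D)$, so this upgrades to convergence in operator norm. Norm convergence of compact selfadjoint operators gives convergence of each eigenvalue counted with multiplicity, hence $\lambda_j(\Omega_n)\to\lambda_j(\Omega)$ for every $j$.

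Condition (M1) is the easy half and holds in every dimension. By density it suffices to treat $u=\varphi\in C_c^\infty(\Omega)$. Its support $S$ is compact with $\dist(S,K_\Omega)>0$. Hausdorff convergence $K_{\Omega_n}\to K_\Omega$ gives $K_{\Omega_n}\cap S=\varnothing$ for $n$ large, so $\varphi\in H_0^1(\Omega_n)$ and the constant sequence works.

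Condition (M2) is the main obstacle, and it is where both the dimension two and the bound of $\ell$ components are used. Let $u$ be a weak limit as in (M2). By Hedberg's characterization, it suffices to show that $u=0$ quasi-everywhere on $K_\Omega$. First I would discard components that collapse to points. Passing to a subsequence, the at most $\ell$ components of $K_{\Omega_n}$ converge in the Hausdorff sense. Those whose diameters tend to zero converge to at most $\ell$ points, and finite sets have zero capacity in $\R^2$. So fix $\delta>0$ and consider points $x\in K_\Omega$ that are limits of points $x_n\in K_{\Omega_n}$ lying in components of diameter at least $\delta$.

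The planar input is a uniform lower bound on capacity. If $E$ is a continuum meeting $\overline{B(x,r/2)}$ and leaving $B(x,r)$, then $\operatorname{cap}(E\cap\overline{B(x,r)},B(x,2r))\geq c>0$, with $c$ independent of $r$; this follows from projecting $E$ onto a radius, or from the logarithmic capacity of a segment. For $r<\delta/2$ this bound holds uniformly for all $x_n$ in components of diameter at least $\delta$. Combined with the Poincaré inequality for functions vanishing on a set of positive capacity, it gives, for $v\in H_0^1(\Omega_n)$,
\[
 \frac{1}{r^2}\int_{B(x_n,r)}|v|^2\leq C\int_{B(x_n,2r)}|\nabla v|^2 .
\]
Apply this to $|u_n|$. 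Rellich compactness lets one pass to the limit on the left-hand side, while lower semicontinuity handles the right-hand side after replacing $|\nabla u_n|^2$ by a weak-$*$ limit measure $\mu$. This yields, for every such $x\in K_\Omega$ and every $r<\delta/2$,
\[
 r^{-2}\int_{B(x,r)}|u|^2\leq C\mu(B(x,2r)).
\]
The set of points where $\limsup_{r\to0}\mu(B(x,2r))>0$ is countable. Off that set the averages of $|u|$ tend to zero, so the quasicontinuous representative of $u$ vanishes at $x$. Countable sets have zero capacity, and $\delta$ runs over $1/k$, so $u=0$ quasi-everywhere on $K_\Omega$.

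The delicate points I expect are the following. One must make sure that every point of $K_\Omega$ is covered, by checking that limits of non-degenerate components account for all of $K_\Omega$ except finitely many points. One must also justify the passage from vanishing ball averages to vanishing quasi-everywhere. For the latter I would first try the standard Lebesgue point theorem for $H^1$ functions, which holds outside a set of zero capacity.
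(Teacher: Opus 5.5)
Your proposal is correct. Note that the paper gives no proof of this proposition to compare against: it quotes the result from \v{S}ver\'ak \cite{Sverak1993} and \cite[Theorem~3.4.14]{HenrotPierre2018}. It adds only that eigenvalue convergence follows from compact resolvent convergence, which is your first paragraph.

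What you supply is a self-contained version of the standard argument, along the lines of \v{S}ver\'ak's original one:
\begin{enumerate}[label=\textup{(\alph*)}]
\item Mosco convergence, with (M2) reduced by Hedberg's theorem to showing $\tilde u=0$ quasi-everywhere on $K_\Omega$;
\item a scale-independent capacity lower bound for planar continua crossing an annulus;
\item a Maz'ya-type Poincar\'e inequality;
\item countability of the atoms of the defect measure $\mu$.
\end{enumerate}
The citation is economical. Your version makes visible where $d=2$ enters: a continuum crossing an annulus has capacity bounded below at every scale, whereas curves have zero capacity when $d\geq3$. It also shows where the bound $\ell$ enters: there are only finitely many degenerate component limits, each a single point of zero capacity. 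The price is several standard inputs that you only cite: Hedberg's characterization, quasi-everywhere Lebesgue points, and the capacity bound itself.

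Two details should be adjusted.
\begin{itemize}
\item On the right-hand side you need upper, not lower, semicontinuity. Since $B(x_n,2r)\subset\overline{B(x,2r+\epsilon)}$ for large $n$, weak-$*$ convergence gives $\limsup_n\int_{B(x_n,2r)}|\nabla u_n|^2\leq\mu(\overline{B(x,2r)})$. This bound decreases to $\mu(\{x\})$ as $r\to0$.
\item The covering issue you flag is automatic. After extracting a subsequence with a fixed number of components $K_n^i\to L^i$, Hausdorff limits commute with finite unions, so $K_\Omega=\bigcup_i L^i$. One fixed $\delta$, namely half the smallest positive $\operatorname{diam}L^i$, then misses at most $\ell$ points of $K_\Omega$. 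There is no need to let $\delta$ run over $1/k$.
\end{itemize}
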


The first conclusion is the convergence in $H_0^1(D)$ of the solutions of the Dirichlet problems, extended by zero outside their domains. The eigenvalue convergence follows from the corresponding compact resolvent convergence. The implication from complementary-Hausdorff convergence to $\gamma$-convergence is the specifically planar part of the result.

We now isolate the topological point needed for the category argument.

\begin{proposition}\label{prop:Gdelta}
The space $\CC_\ell(D)$ is a $G_\delta$ subset of $\OO_\ell(D)$. Consequently, it is completely metrizable, and hence it is a Baire space.
\end{proposition}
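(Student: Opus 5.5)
We will write $\CC_\ell(D)$ as a countable intersection of open subsets of the compact metric space $\OO_\ell(D)$. By Alexandrov's theorem, a $G_\delta$ subset of a complete metric space is completely metrizable. The Baire category theorem then gives the remaining conclusions. Two conditions define $\CC_\ell(D)$ inside $\OO_\ell(D)$: nonemptiness and connectedness. We treat them separately.

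\emph{Nonemptiness.} The empty set corresponds to $K=\overline D$. The condition $\Omega\neq\varnothing$ is equivalent to $K_\Omega\neq\overline D$. We claim this set is open, and in fact a countable union of open sets of the form $\{\Omega: B(x,r)\subset\Omega\}$ with rational $x,r$. Indeed, if $\overline{B(x,r)}\subset\Omega$, then $\dist(\overline{B(x,r)},K_\Omega)>0$, and this persists for nearby $K$ in Hausdorff distance. So $\{\Omega:\overline{B(x,r)}\subset\Omega\}$ is open, and every nonempty $\Omega$ contains such a rational ball.

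\emph{Connectedness.} Disconnection is the condition that is closed under the right formulation. For rational balls $B_1=B(x_1,r_1)$ and $B_2=B(x_2,r_2)$, we define
\[
 F_{B_1,B_2}:=\bigl\{\Omega:\overline{B_1}\cup\overline{B_2}\subset\Omega,\ \overline{B_1},\overline{B_2}\text{ lie in different components of }\Omega\bigr\}.
\]
A disconnected $\Omega$ has two components, each containing a rational closed ball, so it lies in some $F_{B_1,B_2}$. We will not show that $F_{B_1,B_2}$ is closed. Instead we take a countable family of closed sets whose union is exactly the set of nonempty disconnected domains. For $\varepsilon\in\Q_{>0}$, we set
\[
 E_{B_1,B_2,\varepsilon}:=\{\Omega:\overline{B_1}\cup\overline{B_2}\subset\Omega_{\ge\varepsilon},\ \text{no path in }\Omega\text{ joins }B_1\text{ to }B_2\},
\]
where $\Omega_{\ge\varepsilon}=\{x:\dist(x,K_\Omega)\ge\varepsilon\}$.

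\emph{Closedness of $E_{B_1,B_2,\varepsilon}$.} Suppose $\Omega_n\to\Omega$ with $\Omega_n\in E$. Hausdorff convergence gives $\dist(\cdot,K_{\Omega_n})\to\dist(\cdot,K_\Omega)$ uniformly, so the ball condition passes to the limit. Now suppose a path $\gamma$ in $\Omega$ joins $B_1$ to $B_2$. Since $\gamma$ is compact, it has positive distance $\delta$ from $K_\Omega$. Then $\gamma\subset\Omega_n$ once $d_H(K_{\Omega_n},K_\Omega)<\delta$, which is a contradiction. Hence $E$ is closed.

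\emph{Countable union.} If $\Omega$ is disconnected, pick rational balls with closures in distinct components. Then $\dist(\overline{B_i},K_\Omega)>0$, so some rational $\varepsilon$ works. Conversely, every element of $E$ is nonempty and disconnected, because path-components of open planar sets are their components. So the set of nonempty disconnected domains is a countable union of closed sets, that is, an $F_\sigma$.

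\emph{Conclusion.} We have
\[
 \CC_\ell(D)=\{\Omega\neq\varnothing\}\setminus\bigcup E_{B_1,B_2,\varepsilon}
 =\{\Omega\neq\varnothing\}\cap\bigcap\bigl(\OO_\ell(D)\setminus E_{B_1,B_2,\varepsilon}\bigr).
\]
This is an open set intersected with countably many open sets, hence a $G_\delta$. Alexandrov's theorem then gives a complete metric compatible with $\dhc$ restricted to $\CC_\ell(D)$, and the Baire category theorem gives the Baire property.

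The main obstacle is the closedness step: the definition of disconnection has to be robust under Hausdorff limits of complements. Quantifying the balls by $\varepsilon$ handles this. It is the reason a neck closing in the limit, which can destroy connectedness, never creates it. Note that the hypothesis on at most $\ell$ complementary components is not needed here beyond the compactness of $\OO_\ell(D)$.
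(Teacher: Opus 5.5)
Your argument is correct and is essentially the paper's proof. Both encode disconnection through countably many rational witnesses and rest on the fact that being joined by a compact path inside $\Omega$ is an open condition for $\dhc$. The paper uses points $p,q\in D\cap\Q^2$ and the fact that each closed-union-open set $\mathcal N_p\cup\mathcal N_q\cup\mathcal P_{p,q}$ is a $G_\delta$ in a metric space; your $\varepsilon$-margin instead turns the open condition $\overline{B_i}\subset\Omega$ into closed ones and writes the nonempty disconnected domains as an explicit $F_\sigma$.

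One small repair is needed. Define $\Omega_{\ge\varepsilon}:=\{x\in\overline D:\dist(x,K_\Omega)\ge\varepsilon\}$, or use only balls with $\overline{B_i}\subset D$. If $x$ ranges over all of $\R^2$, a ball far outside $\overline D$ satisfies your margin condition and makes the no-path condition hold vacuously. Then $E_{B_1,B_2,\varepsilon}$ would contain connected domains, and your claim that its elements are disconnected would fail.
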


\begin{proof}
Let $Q:=D\cap\Q^2$. For $p,q\in Q$, define
\[
 \mathcal N_p:=\{\Omega\in\OO_\ell(D):p\notin\Omega\}
\]
and
\[
 \begin{aligned}
 \mathcal P_{p,q}:=\{\Omega\in\OO_\ell(D):\;&
 p,q\in\Omega\text{ and}\\[-2pt]
 &p,q\text{ belong to the same connected component of }\Omega\}.
 \end{aligned}
\]
The set $\mathcal N_p$ is closed. Indeed, if $p\in K_{\Omega_n}$ for every $n$ and $K_{\Omega_n}\to K_\Omega$ in Hausdorff distance, then $p\in K_\Omega$.

The set $\mathcal P_{p,q}$ is open. If $\Omega\in\mathcal P_{p,q}$, the two points can be joined by a polygonal path $\Gamma\subset\Omega$. Since $\Gamma$ is compact,
\[
 \delta:=\dist(\Gamma,K_\Omega)>0.
\]
Whenever $\dhc(\Omega',\Omega)<\delta/2$, the path $\Gamma$ is contained in $\Omega'$, so $\Omega'\in\mathcal P_{p,q}$.

We claim that
\begin{equation}\label{eq:C-rational}
 \CC_\ell(D)=
 \bigl(\OO_\ell(D)\setminus\{\varnothing\}\bigr)
 \cap\bigcap_{p,q\in Q}
 \bigl(\mathcal N_p\cup\mathcal N_q\cup\mathcal P_{p,q}\bigr).
\end{equation}
If $\Omega$ is connected, each factor on the right is clearly satisfied. Conversely, if a nonempty open set $\Omega$ is disconnected, two of its connected components contain points $p,q\in Q$. For this pair, $p,q\in\Omega$ but $\Omega\notin\mathcal P_{p,q}$, and the corresponding factor fails.

In a metric space, every closed set is a $G_\delta$, open sets are also $G_\delta$, and finite unions of $G_\delta$ sets are $G_\delta$. Thus each set
\[
 \mathcal N_p\cup\mathcal N_q\cup\mathcal P_{p,q}
\]
is a $G_\delta$. The complement of the singleton $\{\varnothing\}$ is open, and the intersection in \eqref{eq:C-rational} is countable. This proves the first assertion. Finally, a $G_\delta$ subset of a compact metric space is completely metrizable, hence is a Baire space.
\end{proof}

\begin{remark}
\textcolor{black}{The metric $\dhc$ need not itself be complete on $\CC_\ell(D)$. Proposition~\ref{prop:Gdelta} asserts complete metrizability, which is the relevant intrinsic property. For instance, $(0,+\infty)\subset\R$ is not complete for the Euclidean distance, but the same topology is induced by the complete metric $d(x,y)=|\log(x/y)|$.}
\end{remark}

We next prove that the lack of boundary regularity in $\CC_\ell(D)$ does not prevent approximation by smooth domains. The preservation of the number of complementary components is included in the construction. 
We denote by $\cc(X)$ the set of connected components of a topological space $X$ and, when this set is finite, by $\#\cc(X)$ its cardinality.

\begin{proposition}[Smooth interior approximation]\label{prop:smooth-density}
Let $\Omega\in\CC_\ell(D)$. For every sufficiently small $\varepsilon>0$, there exists a connected open set $\Omega_\varepsilon$ with $C^\infty$ boundary such that
\begin{equation}\label{eq:smooth-approx}
 \begin{gathered}
  \Omega_\varepsilon\Subset\Omega,
  \qquad
  \#\cc(\overline D\setminus\Omega_\varepsilon)
  \leq \#\cc(\overline D\setminus\Omega),\\
  \Omega\setminus\Omega_\varepsilon
  \subset\{x\in\Omega:\dist(x,\partial\Omega)<\varepsilon\},
  \qquad
  \dhc(\Omega_\varepsilon,\Omega)\leq\varepsilon.
 \end{gathered}
\end{equation}
In particular, smooth domains are dense in $\CC_\ell(D)$.
\end{proposition}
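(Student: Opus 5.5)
The plan is to build $\Omega_\varepsilon$ as a smoothed sublevel set of a regularized distance function, and then remove the unwanted pieces.

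\textbf{Step 1 (interior sublevel set).} Let $\delta(x):=\dist(x,K_\Omega)$, which is $1$-Lipschitz and positive exactly on $\Omega$. Take a smooth function $g$ on $\R^2$ with $|g-\delta|<\varepsilon/8$. For instance, use a standard regularized distance, or mollify $\delta$ at scale $\varepsilon/8$. By Sard's theorem, choose a regular value $t\in(\varepsilon/4,\varepsilon/2)$ of $g$, and set $U:=\{g>t\}$. Then $U$ has $C^\infty$ boundary and $\overline U\subset\{\delta\geq\varepsilon/8\}\subset\Omega$, so $U\Subset\Omega$. Moreover, $\Omega\setminus U\subset\{\delta<\varepsilon\}$. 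Since $\Omega$ is connected and $\delta(x)\le\dist(x,\partial\Omega)$, points with $\delta\ge\varepsilon$ are at distance at least $\varepsilon$ from $\partial\Omega$.

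\textbf{Step 2 (connectedness).} The set $U$ may be disconnected, but only finitely many of its components meet the compact set $\{\delta\geq\varepsilon\}$. Assume $\varepsilon$ is smaller than the inradius, so this set is nonempty. Any two points of it can be joined by a polygonal path in $\Omega$. Finitely many such paths, for a finite $\varepsilon/8$-net, lie in $\{\delta>\eta\}$ for some $\eta>0$. To avoid this dependence on $\eta$, I would instead proceed as follows: fix one component $V$ of $U$, and adjoin thin smooth tubes along polygonal paths from $V$ to every other component meeting $\{\delta\ge\varepsilon\}$, discarding all remaining components. The tubes lie inside $\Omega$ and are attached with smoothed corners, so the result is smooth and connected. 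The inclusion $\Omega\setminus\Omega_\varepsilon\subset\{\dist(\cdot,\partial\Omega)<\varepsilon\}$ then still holds, because every point with $\delta\ge\varepsilon$ is kept.

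\textbf{Step 3 (Hausdorff estimate).} We have $K_\Omega\subset K_{\Omega_\varepsilon}$. Every point of $K_{\Omega_\varepsilon}\setminus K_\Omega$ lies in $\Omega\setminus\Omega_\varepsilon$, hence within $\varepsilon$ of $K_\Omega$. This gives $\dhc(\Omega_\varepsilon,\Omega)\le\varepsilon$.

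\textbf{Step 4 (component count; the main obstacle).} Each component of $\overline D\setminus\Omega_\varepsilon$ contains points outside $\Omega_\varepsilon$. I would try to show that each such component meets $K_\Omega$. Then components of $\overline D\setminus\Omega_\varepsilon$ are unions of neighbourhoods of components of $K_\Omega$, which gives the count bound, since each component of $K_\Omega$ lies in a single component of the larger set. The danger is a component $H$ of the complement entirely contained in $\Omega$. Such an $H$ is a bounded ``hole'' of $\Omega_\varepsilon$ inside $\Omega$, for example a region enclosed by tubes, or a bump where $\delta$ dips below $t$ away from $K_\Omega$. The fix is to fill such holes: replace $\Omega_\varepsilon$ by $\Omega_\varepsilon\cup H$ for every component $H$ of the complement with $H\subset\Omega$. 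Since $\Omega_\varepsilon$ has smooth compact boundary, there are finitely many such holes, each bounded by a smooth curve. Filling a hole removes a boundary curve and keeps the set smooth, open, connected and inside $\Omega$. The compact inclusion $\Omega_\varepsilon\Subset\Omega$ is preserved because the closure of a hole lies in $\Omega$. Filling only enlarges $\Omega_\varepsilon$, so the other three properties survive.

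The remaining check is that each surviving component meets $K_\Omega$. Distinct components of $K_\Omega$ might lie in one component of $\overline D\setminus\Omega_\varepsilon$, which only helps. So the number of surviving components is at most $\#\cc(K_\Omega)$, and this is where the care is needed. Density in $\CC_\ell(D)$ then follows immediately.
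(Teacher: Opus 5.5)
Your Steps 3 and 4 are the paper's argument. The paper also fills exactly the components of $\overline D\setminus U_\varepsilon$ that miss $K_\Omega$, shows there are finitely many of them via the finitely many boundary curves, and bounds the retained ones via the surjection from $\cc(K_\Omega)$. Your closing hesitation is therefore unnecessary. Once holes are \emph{defined} as the components disjoint from $K_\Omega$, every surviving component meets $K_\Omega$ by construction, and the count is complete. The same finiteness of all components is what makes the retained part of the complement closed, hence the filled set open. The difference from the paper, and the weak part of your write-up, lies in Steps 1--2, i.e.\ in producing a connected smooth $U$ with $\{\delta\ge\varepsilon\}\subset U\Subset\Omega$.

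There are two problems there.

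\textbf{The choice of $\delta$.} $\dist(\cdot,K_\Omega)$ is also positive on $\R^2\setminus\overline D$. So $\{\delta\ge\varepsilon\}$ is not compact, $\{g>t\}$ contains an unbounded exterior region, and $\{\delta\ge\varepsilon/8\}\subset\Omega$ is false. Take instead $\delta:=\dist(\cdot,\R^2\setminus\Omega)$, which equals $\dist(\cdot,\partial\Omega)$ on $\Omega$. The boundary-layer inclusion uses $\dist(x,\partial\Omega)\le\delta(x)$, not the reverse inequality you quote, and connectedness of $\Omega$ plays no role.

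\textbf{The tube step.} It is asserted, not proved. The polygonal paths may cross $\partial U$ many times and cross one another. Turning the union into a $C^\infty$ domain inside $\Omega$ then requires a general-position and corner-rounding argument that you do not give.

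It is also avoidable, because the $\eta$-dependence you tried to escape is harmless: nothing in the statement asks the level to be comparable with $\varepsilon$. Lowering the level only enlarges the set, so $\{\delta\ge\varepsilon\}$ stays inside and the boundary-layer and Hausdorff bounds are unaffected. Any positive level still keeps the set compactly inside $\Omega$.

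Concretely:
\begin{enumerate}[label=\textup{(\alph*)}]
\item Cover $\{\delta\ge\varepsilon\}$ by finitely many closed balls in $\Omega$ and join their centres by polygonal paths in $\Omega$.
\item Let $L$ be the resulting compact connected set and set $\eta:=\min_L\delta>0$.
\item Choose a smooth $g$ with $|g-\delta|<\eta/4$ and a regular value $t\in(\eta/2,3\eta/4)$.
\item Let $U$ be the component of $\{g>t\}$ containing $L$.
\end{enumerate}
This is precisely the paper's construction, except that the paper uses a cutoff $f\in C_c^\infty(\Omega)$ equal to $1$ near $L$ instead of a regularized distance. It yields connectedness, smooth boundary and $\{\delta\ge\varepsilon\}\subset U\Subset\Omega$ at once, and with it your proof goes through.
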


\begin{proof}
Set $K:=\overline D\setminus\Omega$ and consider the compact set
\[
 A_\varepsilon:=
 \{x\in\overline D:\dist(x,K)\geq\varepsilon\}
 \Subset\Omega.
\]
It need not be connected, since passages of width less than $2\varepsilon$ may disappear. We first place all of it inside a compact connected subset of $\Omega$. Cover $A_\varepsilon$ by finitely many closed balls contained in $\Omega$. Since a connected open subset of $\R^2$ is polygonally connected, the centers of these balls can be joined to a fixed point of $\Omega$ by finitely many polygonal paths contained in $\Omega$. The union of the balls and paths is a compact connected set $L_\varepsilon$ satisfying
\[
 A_\varepsilon\subset L_\varepsilon\Subset\Omega.
\]

Choose $f\in C_c^\infty(\Omega)$, with $0\leq f\leq1$, which is equal to $1$ in a neighborhood of $L_\varepsilon$. Let $t\in(0,1)$ be a regular value of $f$, and let $U_\varepsilon$ be the connected component of $\{f>t\}$ containing $L_\varepsilon$. Then
\begin{equation}\label{eq:Ueps}
 A_\varepsilon\subset U_\varepsilon\Subset\Omega,
\end{equation}
and $U_\varepsilon$ is connected with $C^\infty$ boundary.

{The set $U_\varepsilon$ may have acquired artificial holes. We first
observe that the compact set $\overline D\setminus U_\varepsilon$ has only
finitely many connected components. Indeed, the components meeting $K$ are at
most $\#\cc(K)$ in number. A component disjoint from $K$ is compactly
contained in $\Omega$ and is a bounded complementary component of the smooth
domain $U_\varepsilon$. There are only finitely many such components because
$\partial U_\varepsilon$ is a compact smooth one-dimensional manifold and
therefore has finitely many connected components. Let
$C_1,\ldots,C_m$ denote all the connected components of
$\overline D\setminus U_\varepsilon$.}
{Those disjoint
from $K$ are precisely the artificial holes. Retain the components which meet
the original complement:}
\[
 I:=\{i\in\{1,\ldots,m\}:C_i\cap K\neq\varnothing\},
 \qquad
 F_\varepsilon:=\bigcup_{i\in I}C_i,
\]
and define
\[
 \Omega_\varepsilon:=\overline D\setminus F_\varepsilon.
\]
{Since $K\subset F_\varepsilon$ and $\partial D\subset K$, the set
$\Omega_\varepsilon$ is an open subset of $D$ contained in $\Omega$. It is
obtained from $U_\varepsilon$ by filling precisely the artificial holes. It is
therefore connected, and its boundary consists exactly of those connected
components of $\partial U_\varepsilon$ adjacent to retained complementary
components. In particular, it is smooth. The union of the finitely many
filled components is compact and disjoint from the compact set $K$, hence lies
at positive distance from $K$. Together with \eqref{eq:Ueps}, this gives
$\Omega_\varepsilon\Subset\Omega$.}

\textcolor{black}{Every connected component of $K$ is contained in a unique $C_i$, and every retained $C_i$ contains at least one such component. Hence the map sending a component of $K$ to the $C_i$ containing it is onto the set of retained components. Consequently,}
\[
 \#\cc(F_\varepsilon)\leq\#\cc(K)\leq\ell.
\]
Thus $\Omega_\varepsilon\in\CC_\ell(D)$.

Finally, $K\subset F_\varepsilon$ and, by \eqref{eq:Ueps}, every point $x\in F_\varepsilon\setminus K$ satisfies $\dist(x,K)<\varepsilon$. Since $F_\varepsilon\setminus K=\Omega\setminus\Omega_\varepsilon$ and $\dist(x,K)=\dist(x,\partial\Omega)$ for $x\in\Omega$, this proves the boundary-layer inclusion in \eqref{eq:smooth-approx}. Moreover,
\[
 d_H(F_\varepsilon,K) \leq\varepsilon,
\]
which is the last assertion in \eqref{eq:smooth-approx}.
\end{proof}

We can now transfer the classical generic-simplicity property from smooth domain spaces to $\CC_\ell(D)$.

\begin{proposition}[Generic simplicity]\label{prop:generic-simplicity}
The set $\mathcal S_\ell$ defined in \eqref{eq:simple-set} is residual in $\CC_\ell(D)$.
\end{proposition}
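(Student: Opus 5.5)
We write $\mathcal S_\ell=\bigcap_{j\geq1}\mathcal V_j$, where
\[
 \mathcal V_j:=\{\Omega\in\CC_\ell(D):\lambda_j(\Omega)<\lambda_{j+1}(\Omega)\}.
\]
Since $\CC_\ell(D)$ is a Baire space by Proposition~\ref{prop:Gdelta}, it suffices to show that each $\mathcal V_j$ is open and dense. Note that we use one index $j$ at a time, not the set of domains whose first $j$ eigenvalues are simple.

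\textbf{Openness.} Openness comes directly from Proposition~\ref{prop:sverak}. If $\Omega_n\to\Omega$ in $\dhc$ with $\Omega\in\CC_\ell(D)$, then $\lambda_j(\Omega_n)\to\lambda_j(\Omega)$ and $\lambda_{j+1}(\Omega_n)\to\lambda_{j+1}(\Omega)$. Hence a strict inequality at $\Omega$ persists nearby, and the complement of $\mathcal V_j$ is closed in $\CC_\ell(D)$.

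\textbf{Density.} Fix $\Omega\in\CC_\ell(D)$ and $\varepsilon>0$.
\begin{enumerate}[label=\textup{(\arabic*)}]
 \item By Proposition~\ref{prop:smooth-density}, choose a connected $C^\infty$ domain $\Omega_\varepsilon\in\CC_\ell(D)$ with $\dhc(\Omega_\varepsilon,\Omega)\leq\varepsilon$.
 \item Invoke Micheletti's theorem \cite{Micheletti1972,MichelettiMetric1972}. For the bounded smooth domain $\Omega_\varepsilon$, the domains $(I+\psi)(\Omega_\varepsilon)$ with $\psi$ in a small $C^m$ neighborhood of $0$ have simple spectrum for a residual, in particular dense, set of $\psi$. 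Concretely, one may use a residual set of normal graph perturbations of $\partial\Omega_\varepsilon$.
 \item Pick such a $\psi$ with $\|\psi\|_{C^1}$ small enough that three things hold: $I+\psi$ is a diffeomorphism of $\R^2$; $(I+\psi)(\Omega_\varepsilon)\subset D$, which is possible since $\Omega_\varepsilon\Subset\Omega\subset D$ so there is room; and the complement keeps the same number of components.
\end{enumerate}

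\textbf{Checking the perturbed domain.} Set $\Omega'=(I+\psi)(\Omega_\varepsilon)$.
\begin{itemize}
 \item $\Omega'$ is connected, since it is a diffeomorphic image of a connected set.
 \item $\overline D\setminus\Omega'$ has at most $\ell$ components. The perturbation is supported near $\partial\Omega_\varepsilon\Subset D$, where $I+\psi$ is a diffeomorphism of $\R^2$ close to the identity. So the components of $\R^2\setminus\Omega'$ correspond bijectively to those of $\R^2\setminus\Omega_\varepsilon$, and intersecting with $\overline D$ does not change this, because the unbounded component contains $\R^2\setminus D$. Hence $\Omega'\in\CC_\ell(D)$.
 \item $\dhc(\Omega',\Omega_\varepsilon)\leq\|\psi\|_{C^0}$, because $I+\psi$ moves points by at most $\|\psi\|_{C^0}$ and equals the identity on $\overline D\setminus$ a neighbourhood of $\overline{\Omega_\varepsilon}$. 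Hence $\dhc(\Omega',\Omega)\leq2\varepsilon$.
\end{itemize}
Since $\Omega'$ has simple spectrum, $\Omega'\in\mathcal S_\ell\subset\mathcal V_j$, which gives density of every $\mathcal V_j$.

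\textbf{Main obstacle.} The delicate step is the precise form of Micheletti's theorem to quote. Two points need care. First, we must ensure it yields density of simple-spectrum domains among arbitrarily small smooth perturbations of a given smooth domain, in a form where the perturbations can be localized inside $D$. If only a statement on a $C^m$ metric space of domains is available, we use that open $C^m$ balls around $\Omega_\varepsilon$ contain simple-spectrum domains, and that $C^m$-closeness of domains in Micheletti's metric implies $C^0$-closeness of boundaries, hence $\dhc$-closeness. Second, we must check the topological invariance of the complementary components under the perturbation, which follows from the diffeomorphism argument above.

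Alternatively, and more weakly, we only need, for each fixed $j$, a small perturbation separating $\lambda_j$ from $\lambda_{j+1}$. If the global statement is hard to cite, this can be obtained by Hadamard's formula: a multiple eigenvalue splits under some normal boundary variation, by unique continuation for the normal derivatives of the eigenfunctions.
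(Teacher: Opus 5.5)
Your proof is correct and is essentially the paper's argument: openness of each set $\{\lambda_j<\lambda_{j+1}\}$ comes from \v{S}ver\'ak's theorem, density comes from smooth interior approximation followed by a small Micheletti perturbation kept inside $D$, and the conclusion follows from the Baire property of $\CC_\ell(D)$. One justification needs repair: $\R^2\setminus D$ need not lie in the unbounded component (take $D$ an annulus), but since $I+\psi$ is the identity outside a compact subset of $D$, it restricts to a homeomorphism of $\overline D$ carrying $\overline D\setminus\Omega_\varepsilon$ onto $\overline D\setminus\Omega'$, which gives the component count and the Hausdorff bound directly.
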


\begin{proof}
For $j\geq1$, set
\[
 \mathcal S_{\ell,j}:=
 \{\Omega\in\CC_\ell(D):
 \lambda_j(\Omega)<\lambda_{j+1}(\Omega)\}.
\]
Proposition~\ref{prop:sverak} shows that $\mathcal S_{\ell,j}$ is open.

We claim that domains with simple spectrum are dense in $\CC_\ell(D)$. Let $\Omega\in\CC_\ell(D)$ and let $\eta>0$. By Proposition~\ref{prop:smooth-density}, there exists a connected smooth domain $U\Subset D$ such that
\[
 U\in\CC_\ell(D),
 \qquad
 \dhc(U,\Omega)<\frac\eta2.
\]
\textcolor{black}{Micheletti's classical generic-simplicity theorem for smooth domains \cite{Micheletti1972,MichelettiMetric1972} gives a smooth domain $\widetilde U$ with simple Dirichlet spectrum, arbitrarily close to $U$ in a $C^m$ domain topology, for any fixed $m\geq3$. We may choose the perturbation so small that $\widetilde U\Subset D$, that $\widetilde U$ is diffeomorphic to $U$, and that}
\[
 \dhc(\widetilde U,U)<\frac\eta2.
\]
In particular, $\widetilde U$ is connected, has the same number of complementary components as $U$, and belongs to $\CC_\ell(D)$. Hence
\[
 \dhc(\widetilde U,\Omega)<\eta,
\]
which proves the claim.

The dense set of domains with simple spectrum is contained in every $\mathcal S_{\ell,j}$. Thus each $\mathcal S_{\ell,j}$ is open and dense. Since
\[
 \mathcal S_\ell=\bigcap_{j\geq1}\mathcal S_{\ell,j},
\]
the conclusion follows from Proposition~\ref{prop:Gdelta} and the Baire category theorem.
\end{proof}

\begin{proof}[Proof of Theorem~\ref{thm:framework-intro}]
Combine Propositions~\ref{prop:Gdelta}, \ref{prop:smooth-density} and \ref{prop:generic-simplicity}.
\end{proof}

\begin{remark}[Transfer of generic properties]\label{rem:transfer}
\textcolor{black}{The preceding argument applies to more than spectral simplicity, provided that the property under consideration is open in complementary-Hausdorff convergence at each fixed index. As an example, the author proved in the Micheletti topology that, generically among $C^m$ domains (with $m\geq 3$), every Dirichlet eigenfunction has nonzero average, and deduced that a generic domain has no nontrivial Euclidean symmetry \cite{Boulard2026GenericMeans}. 
These conclusions transfer to $\CC_\ell(D)$. Indeed, for each $j$, consider the set of domains for which $\lambda_j$ is simple and an associated $L^2$-normalized eigenfunction $\varphi_j$ satisfies $\int_\Omega\varphi_j\neq0$. 
This condition is independent of the sign of $\varphi_j$. Compact-resolvent convergence makes it open, since normalized eigenfunctions associated with a simple eigenvalue converge in $L^2$ up to sign. 
Proposition~\ref{prop:smooth-density} and the smooth-domain result of \cite{Boulard2026GenericMeans} make it dense. 
Taking the intersection over $j$ gives the claimed residual property, and the conclusion concerning symmetries follows as in that work. }
\end{remark}

\section{The local double-lock surgery}\label{sec:surgery}

We begin with a planar topological observation which will make the preservation of connectedness transparent.

\begin{lemma}[One-point attachment]\label{lem:attachment}
Let $U\subset\R^2$ be a connected open set, and identify $\R^2\cup\{\infty\}$ with $\mathbb S^2$. Let $F\subset\overline U$ be a compact set such that
\[
 F\cap(\mathbb S^2\setminus U)=\{y\}
\]
and assume that $F$ admits a strong deformation retraction onto $y$. Then $U\setminus F$ is connected.
\end{lemma}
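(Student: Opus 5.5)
The plan is to pass to complements in $\mathbb S^2$ and use Alexander duality, reducing connectedness to a vanishing statement in Čech cohomology. Set $X:=\mathbb S^2\setminus U$. It is a closed, hence compact, subset of $\mathbb S^2$. It is nonempty because $\infty\in X$ (as $U\subset\R^2$). By hypothesis $y\in X$ and $F\cap X=\{y\}$. Since $F\subset\overline U\subset\mathbb S^2$ is compact, the set $A:=X\cup F$ is compact, and
\[
 U\setminus F=\mathbb S^2\setminus A .
\]
For a nonempty proper compact subset $A\subset\mathbb S^2$, Alexander duality gives $\tilde H_0(\mathbb S^2\setminus A)\cong\check H^1(A)$ with, say, $\mathbb Z$ coefficients. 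Applied to $X$, the connectedness of $U=\mathbb S^2\setminus X$ gives $\check H^1(X)=0$. Applied to $A$, it suffices to show $\check H^1(A)=0$ in order to conclude that $U\setminus F$ is connected. If $A=\mathbb S^2$, then $U\setminus F$ is empty; this degenerate case is excluded or treated separately, according to the paper's convention.

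Next I apply the Mayer--Vietoris sequence in Čech cohomology to the two closed sets $X$ and $F$ covering $A$. This sequence is exact for closed covers of compact Hausdorff spaces, by the continuity (tautness) of Čech cohomology. Since $X\cap F=\{y\}$, the relevant portion reads
\[
 \check H^0(X)\oplus\check H^0(F)\longrightarrow\check H^0(\{y\})
 \longrightarrow\check H^1(A)\longrightarrow\check H^1(X)\oplus\check H^1(F)
 \longrightarrow\check H^1(\{y\})=0 .
\]
The first arrow is surjective, since a constant function on $X$ restricts to any prescribed value at $y$. Hence $\check H^1(A)\cong\check H^1(X)\oplus\check H^1(F)$.

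It remains to see that $\check H^1(F)=0$. Here the strong deformation retraction of $F$ onto $y$ is used. Čech cohomology is a homotopy invariant on compact Hausdorff (indeed paracompact) spaces, so $\check H^1(F)\cong\check H^1(\{y\})=0$. Combining the three steps, $\check H^1(A)=0$, and Alexander duality gives $\tilde H_0(U\setminus F)=0$, that is, $U\setminus F$ is connected.

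The main obstacle is to make sure that the cohomology theory used is the right one for rough sets. $X$ may be very wild, for example with infinitely many pieces locally, so singular cohomology does not satisfy Alexander duality for it. One must therefore work throughout with Čech cohomology, where duality, closed Mayer--Vietoris and homotopy invariance all hold. If one prefers to avoid this machinery, the fallback is an elementary argument. Suppose $U\setminus F=V_1\sqcup V_2$ with $V_1,V_2$ nonempty and open. Then use the deformation of $F$ to $y$ to show that $F$ cannot separate: a Janiszewski-type argument shows that the union of the closed sets $X$ and $F$, meeting in the connected set $\{y\}$, does not separate two points that neither $X$ nor $F$ separates. Each hypothesis of that argument is then checked separately: $X$ does not separate because $U$ is connected, and $F$ does not separate the sphere because it is a contractible compactum.
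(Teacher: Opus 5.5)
Your proof is correct, and its overall strategy is the paper's. In both, Alexander duality in \v{C}ech cohomology turns connectedness of $U\setminus F=\mathbb S^2\setminus(X\cup F)$ into the vanishing $\check H^1(X\cup F)=0$, and a second application gives $\check H^1(X)=0$ from the connectedness of $U$.

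The difference is the middle step, passing from $X$ to $X\cup F$. The paper glues the strong deformation retraction of $F$ onto $y$ with the identity on $X$. Since $X$ and $F$ are closed and the homotopy fixes $y$ at all times, the pasting lemma gives a deformation retraction of $X\cup F$ onto $X$. Homotopy invariance alone then yields $\check H^1(X\cup F)\cong\check H^1(X)$. This is exactly where the word ``strong'' in the hypothesis is used. You instead split $X\cup F$ with the closed Mayer--Vietoris sequence and apply homotopy invariance only to $F$.

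The paper's route is shorter and does not need exactness of Mayer--Vietoris for closed covers, which you justify correctly but tersely via tautness. Your route uses less of the hypothesis: it only needs $\check H^1(F)=0$, not a contraction fixing $y$. It also generalises: if you use injectivity of $\check H^1(X\cup F)\to\check H^1(X)\oplus\check H^1(F)$ instead of your isomorphism, it works whenever $X\cap F$ is connected. In that form it is the cohomological version of the Janiszewski theorem behind your fallback argument.

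Finally, the degenerate case you leave open does not occur for nonempty $U$. If $X\cup F=\mathbb S^2$, then $F=\overline U$ with $\partial U=\{y\}$. Hence $U=\mathbb S^2\setminus\{y\}$ and $F=\mathbb S^2$, which is not contractible.
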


\begin{proof}
{Set $K:=\mathbb S^2\setminus U$. Glue the strong deformation
retraction of $F$ onto $y$ to the identity on $K$. Since $F$ and $K$ are
closed and meet only at $y$, the pasting lemma gives a deformation retraction
of $K\cup F$ onto $K$. Alexander duality for compact subsets of
$\mathbb S^2$, as presented by Bredon \cite[Chapter~VI]{Bredon1993}, first
gives}
\[
 \widetilde H_0\bigl(\mathbb S^2\setminus(K\cup F)\bigr)
 \simeq \check H^1(K\cup F),
\]
where $\widetilde H_{*}$ denotes the reduced singular homology with coefficients in $\mathbb Z$, and $\check H^*$ the \v{C}ech cohomology with coefficients in $\mathbb Z$. Recall that $\widetilde H_0(X)=0$ if and only if the topological space $X$ is path connected.

Now, since $F$ is attached to $K$ at the single point $y$ and strongly retracts onto that point, by homotopy invariance we have
\[
 \check H^1(K\cup F)
 \simeq \check H^1(K).
\]
\textcolor{black}{Finally, applying Alexander duality once more gives $\check H^1(K)
 \simeq \widetilde H_0(U)=0$, because $U$ is path connected. Since}
\[
\mathbb S^2\setminus(K\cup F)
 =(\mathbb S^2\setminus K)\setminus F
 =U\setminus F,
\]
we obtain $\widetilde H_0(U\setminus F)=0$, and hence $U\setminus F$ is connected.
\end{proof}

{Finite embedded trees satisfy the hypothesis of
Lemma~\ref{lem:attachment}: contract their terminal edges successively toward
the root $y$. The same contraction works when pairwise disjoint closed disks
are attached at terminal vertices, after each disk is first contracted to its
attachment point. We will use both forms.}
\textcolor{black}{Here an embedded arc is the image of an injective continuous map $[0,1]\to\R^2$, and an embedded finite tree is a finite connected graph without cycles whose edges are embedded arcs meeting only at their prescribed common vertices.}

\textcolor{black}{The disk $B_*$ in the next lemma is kept untouched by the surgery. Besides guaranteeing nonemptiness, it will provide in Proposition~\ref{prop:tuning} the uniform bound $\lambda_N(M_{r,s})\leq\lambda_N(B_*)$.}

\begin{lemma}[Geometric implantation]\label{lem:geometry}
Let $\Omega\in\CC_\ell(D)$, let $\eta>0$, and let $B_*\Subset\Omega$ be an open disk. There exist two points $c_1,c_2\in\Omega$, a number $r_0>0$, and, for every $r,s\in(0,r_0)$ and $\varepsilon>0$ sufficiently small, open sets
\[
 M_{r,s},\qquad A_{r,s},\qquad \Omega_{r,s,\varepsilon}
\]
with the following properties:
\begin{enumerate}[label=\textup{(\roman*)}]
 \item $M_{r,s}\in\CC_\ell(D)$, $B_*\subset M_{r,s}$, and
 \[
 A_{r,s}=M_{r,s}\,\sqcup\,B(c_1,r)\,\sqcup\,B(c_2,s)
 \in\OO_\ell(D).
 \]
 \item $\Omega_{r,s,\varepsilon}\in\CC_\ell(D)$ and
 \[
 \dhc(\Omega_{r,s,\varepsilon},A_{r,s})\longrightarrow0
 \qquad(\varepsilon\to0).
 \]
 \item Every set in the construction lies in the $\eta$-ball about $\Omega$:
 \[
 \dhc(\Omega_{r,s,\varepsilon},\Omega)<\eta,
 \qquad
 \dhc(A_{r,s},\Omega)<\eta.
 \]
 \item The map $(r,s)\mapsto M_{r,s}$ is continuous for $\dhc$. Moreover, if $0<t_1<t_2<r_0$, then
 \[
 M_{t_2,t_2}\subset M_{t_1,t_1}.
 \]
\end{enumerate}
\end{lemma}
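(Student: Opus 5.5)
Fix a small disk $B(x_0,\rho)\Subset\Omega$ disjoint from $\overline{B_*}$, with $\rho<\eta/4$. The walls will be attached to a complementary component of $\Omega$ through a polygonal arc that stays in a thin tube inside $\Omega$.

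\textbf{Building the tree.} Choose a point $y\in\partial\Omega$ at distance $<\eta/4$ from some point of $\Omega$. I will use this choice, but a different arrangement is also possible: pick $y\in K_\Omega$ nearest to $x_0$, and let $\Gamma_0$ be the segment from $y$ to $x_0$; this segment lies in $\Omega$ except at $y$. Instead of forcing the surgery to be small, I put it near the boundary. Take $x_0\in\Omega$ with $\dist(x_0,K_\Omega)<\eta/4$, which is possible since $\partial\Omega\neq\varnothing$. Then take the closest-point segment $\Gamma_0$; it has length $<\eta/4$ and avoids $\overline{B_*}$ once $x_0$ is chosen away from $B_*$. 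Place two centers $c_1,c_2$ near $x_0$, at distance less than $\eta/4$ from $y$, with the closed disks $\overline{B(c_i,2r_0)}$ disjoint from each other, from $\Gamma_0$ (except through branch edges), from $\overline{B_*}$, and contained in $\Omega$. The tree $T$ consists of $\Gamma_0$ together with two short branch segments from $x_0$ to the points $p_i\in\partial B(c_i,2r_0)$.

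\textbf{The three sets.} The wall of chamber $i$ is the annulus $W_i(r)=\overline{B(c_i,2r_0)}\setminus B(c_i,r)$, and it is attached to the tree at $p_i$. Define
\[
M_{r,s}=\Omega\setminus\bigl(T\cup W_1(r)\cup W_2(s)\cup \overline{B(c_1,r)}\cup\overline{B(c_2,s)}\bigr)=\Omega\setminus\bigl(T\cup\overline{B(c_1,2r_0)}\cup\overline{B(c_2,2r_0)}\bigr),
\]
but this is independent of $r,s$, which is not what we want.

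I therefore shrink the outer radii with the parameters: use walls $W_1(r)=\overline{B(c_1,2r)}\setminus B(c_1,r)$, attached by the branch segment to the point $c_1+2r e$, and similarly for $s$. Then set
\[
M_{r,s}:=\Omega\setminus\bigl(T_{r,s}\cup\overline{B(c_1,2r)}\cup\overline{B(c_2,2s)}\bigr).
\]
This set is connected by Lemma~\ref{lem:attachment}, applied with $U=\Omega$ and $F$ the tree with two closed disks attached at terminal vertices, which retracts to $y$. It contains $B_*$. Its complement is $K_\Omega\cup F$, and $F$ is glued to the component of $K_\Omega$ containing $y$, so the number of complementary components does not increase; hence $M_{r,s}\in\CC_\ell(D)$. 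Next, $A_{r,s}=M_{r,s}\sqcup B(c_1,r)\sqcup B(c_2,s)$ has complement $K_\Omega\cup T_{r,s}\cup W_1\cup W_2$. This complement is still a union of the same components, with the annuli attached to the tree, so $A_{r,s}\in\OO_\ell(D)$. For $\Omega_{r,s,\varepsilon}$, open a door: remove from each annulus a radial slit sector of angular width $\varepsilon$, placed away from the attachment point. The annulus with a door is a closed arc-like strip attached at one point, hence contractible, so Lemma~\ref{lem:attachment} again gives connectedness. The Hausdorff distance between the complements is $O(\varepsilon r_0)$, which gives (ii).

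\textbf{Remaining properties.} For (iii), every complement contains $K_\Omega$ and lies within the $\eta/2$-neighborhood of $y$, where all the added material sits, so the Hausdorff distance is $<\eta$. For (iv), the complement $K_\Omega\cup T_{r,s}\cup\overline{B(c_1,2r)}\cup\overline{B(c_2,2s)}$ varies Hausdorff-continuously provided the branch segments depend continuously on $(r,s)$. I take them from $x_0$ to $c_i+2r e_i$, with $e_i$ the direction pointing toward $x_0$. Then $T_{r,s}$ is the segment $[x_0,c_i+2re_i]$, and it shrinks monotonically as $r$ grows. I expect this to be the delicate point: monotonicity requires the complements to be nested, that is, $T_{t_1,t_1}\cup\overline{B(c_i,2t_1)}\subset T_{t_2,t_2}\cup\overline{B(c_i,2t_2)}$. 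This holds because the segment from $x_0$ to $c_i+2t_1e_i$ is contained in the segment to $c_i+2t_2 e_i$ union the larger disk. The disk is convex and contains the radial segment, so the nesting holds, and therefore $M_{t_2,t_2}\subset M_{t_1,t_1}$.
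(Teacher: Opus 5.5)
Your construction is correct and follows the paper's proof. It uses the same ingredients: a closest-point stem from $y\in K_\Omega$, two radial branches ending at chambers near the boundary, Lemma~\ref{lem:attachment} for the connectedness of $M_{r,s}$ and of the open-door domains, and nesting because the shortened radial branch lies in the larger disk. The only difference is that you use thick annular walls $\overline{B(c_i,2r)}\setminus B(c_i,r)$ with sector doors, where the paper uses thin circles $\partial B(c_i,r)$ with arc doors, and this changes nothing essential.

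Incidentally, the variant you discarded, with outer radius fixed at $2r_0$, would also have been acceptable. A constant $M_{r,s}$ satisfies (iv) trivially and works equally well in Proposition~\ref{prop:tuning}.
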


\begin{proof}
Write $K:=K_\Omega$. Choose $x\in\Omega\setminus\overline{B_*}$ so close to $K$ that
\[
 d:=\dist(x,K)<\eta/2,
\]
\textcolor{black}{and let $y\in K$ realize this distance. The open segment $(y,x]$ lies in $\Omega$: otherwise it would meet $K$ at a point closer to $x$ than $y$.
Choose a ball centered at $x$, contained in $\Omega\setminus\overline{B_*}$, and of radius smaller than $\eta-d$. Inside this ball choose two points $c_1,c_2$. Join them to the segment $[y,x]$ by two disjoint branches meeting the segment at a common branching point. After decreasing the ball, all these arcs are embedded, their only intersections are the prescribed vertices, and the whole device lies in the $\eta$-neighborhood of $K$. See Figure~\ref{fig:locks} for the final construction.}

For $r,s<r_0$ small, let $C_1(r)=B(c_1,r)$ and $C_2(s)=B(c_2,s)$. Each branch is chosen radial near its center and stops at the boundary of the corresponding disk. Denote by $L_{r,s}$ the union of the main stem, the two branches, and the closed disks $\overline{C_1(r)}$, $\overline{C_2(s)}$. Thus $L_{r,s}$ is a tree with two terminal vertices thickened into disks, it meets $\mathbb S^2\setminus\Omega$ only at $y$, and it retracts onto $y$. Set
\[
 M_{r,s}:=\Omega\setminus L_{r,s}.
\]
\textcolor{black}{Lemma~\ref{lem:attachment} shows that $M_{r,s}$ is connected. The construction is disjoint from $B_*$, and the new part of the complement is connected and meets $K$. Hence it creates no new connected component of the complement $\overline D\setminus M_{r,s}$. Therefore $M_{r,s}\in\CC_\ell(D)$.}

Replace now each closed disk in $L_{r,s}$ by its boundary circle, while retaining the stems and branches, and denote the resulting compact set by $T_{r,s,0}$. Then
\[
 \Omega\setminus T_{r,s,0}
 =M_{r,s}\,\sqcup\,C_1(r)\,\sqcup\,C_2(s).
\]
This is $A_{r,s}$. Again, $K\cup T_{r,s,0}$ has no more connected components than $K$, because the whole graph is attached to one component of $K$.

For $\varepsilon>0$, remove from each circular wall an open arc of length $\varepsilon$, away from the attachment point of the branch, and call the remaining compact set $T_{r,s,\varepsilon}$. Each broken circle is an arc, so $T_{r,s,\varepsilon}$ is a finite tree attached to $K$ only at $y$. Define
\[
 \Omega_{r,s,\varepsilon}:=\Omega\setminus T_{r,s,\varepsilon}.
\]
Lemma~\ref{lem:attachment} proves that $\Omega_{r,s,\varepsilon}$ is connected. Since $T_{r,s,\varepsilon}$ is connected to $K$, the number of connected components of the complement does not increase, and hence $\Omega_{r,s,\varepsilon}\in\CC_\ell(D)$. This is precisely where the open doors are used.

As $\varepsilon\to0$, the two broken circles converge in Hausdorff distance to the full circles. Thus $\Omega_{r,s,\varepsilon}\to A_{r,s}$ for $\dhc$. All the compact sets added to $K$ lie in its $\eta$-neighborhood, which proves (iii). The disks and the radial branches depend continuously on their radii in Hausdorff distance, proving the first part of (iv). Finally, when a radius grows, the radial branch becomes shorter, and the removed portion of the branch is contained in the larger disk. Consequently $L_{t_1,t_1}\subset L_{t_2,t_2}$ for $t_1<t_2$, which gives the last inclusion.
\end{proof}

\begin{figure}[ht]
\centering
\begin{tikzpicture}[scale=0.9,line cap=round,line join=round]
  \draw[thick]
    plot[smooth cycle, tension=0.72] coordinates{
      (-3.2,0) (-2.5,1.8) (-0.3,2.2) (2.1,2.05)
      (3.8,0.1) (2.3,-2.05) (-0.2,-2.3) (-2.6,-1.85)};
  \node at (-2.5,1.35) {$\Omega$};

  \coordinate (y)  at (-3.2,0);
  \coordinate (b)  at (-1.0,0);
  \coordinate (c1) at (0.65,0.75);
  \coordinate (c2) at (0.70,-0.75);
  \coordinate (e1) at (0.122,0.510);
  \coordinate (e2) at (0.224,-0.540);

  \draw[very thick] (y) -- (b);
  \draw[very thick] (b) -- (e1);
  \draw[very thick] (b) -- (e2);

  \draw[very thick]
    (c1) ++(44:0.58)
    arc[start angle=44,end angle=376,radius=0.58];

  \draw[very thick]
    (c2) ++(344:0.52)
    arc[start angle=344,end angle=676,radius=0.52];

  \fill (y)  circle (2.2pt);
  \fill (c1) circle (1.3pt);
  \fill (c2) circle (1.3pt);

  \node[below right] at (y) {$\,y\in K_\Omega$};
  \node at (0.65,0.54)  {$c_1$};
  \node at (0.70,-0.96) {$c_2$};
  \node at (0.65,1.03)  {$C_1$};
  \node at (0.70,-0.51) {$C_2$};

  \node[align=center,font=\small] at (1.95,-0.05)
    {open doors\\[-1pt]$\varepsilon>0$};
\end{tikzpicture}
\caption{Schematic drawing of the geometric device used in the proof of {Lemma~\ref{lem:geometry}}.}
\label{fig:locks}
\end{figure}

{The freedom in the two radii now allows us to create, without imposing
global simplicity, a positive gap between consecutive distinct eigenvalues at
an arbitrarily high index.}

\begin{proposition}[Spectral tuning]\label{prop:tuning}
Let $\Omega\in\CC_\ell(D)$, let $k,N\geq1$, and let $\eta>0$. There exist $j\geq N$ and $\widetilde\Omega\in\CC_\ell(D)$ such that
\[
 \dhc(\widetilde\Omega,\Omega)<\eta
 \qquad\text{and}\qquad
 0<\lambda_{j+1}(\widetilde\Omega)-\lambda_j(\widetilde\Omega)<\frac1k.
\]
\end{proposition}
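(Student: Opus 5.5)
The plan is to build $\widetilde\Omega$ as an open-door domain $\Omega_{r,s,\varepsilon}$ from Lemma~\ref{lem:geometry}. Fix an open disk $B_*\Subset\Omega$ and apply Lemma~\ref{lem:geometry} with this $\eta$; this gives $c_1,c_2,r_0$ and the families $M_{r,s}$, $A_{r,s}$, $\Omega_{r,s,\varepsilon}$. Since $A_{r,s}$ is a disjoint union, its Dirichlet spectrum, counted with multiplicity, is the union of the spectra of $M_{r,s}$, $B(c_1,r)$ and $B(c_2,s)$. Write $\mu(t):=j_{0,1}^2/t^2$ for the first eigenvalue of a disk of radius $t$. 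The second eigenvalue of that disk is $j_{1,1}^2/t^2$, which lies far above $\mu(t)$. The goal is to choose $r,s$ so that $\mu(r)<\mu(s)$ are two \emph{consecutive distinct} values of $\Spec(A_{r,s})$, at distance less than $1/k$, and lying above $\lambda_N(B_*)$.

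\textbf{Step 1: choose a common radius $t$.} Work on the diagonal $r=s=t$. By domain monotonicity and Lemma~\ref{lem:geometry}(iv), each map $t\mapsto\lambda_i(M_{t,t})$ is nondecreasing. By Proposition~\ref{prop:sverak} and the $\dhc$-continuity in (iv), each of these maps is also continuous. Meanwhile $\mu$ is strictly decreasing. Consequently, for each $i$ the equation $\mu(t)=\lambda_i(M_{t,t})$ has at most one solution.

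Restrict $t$ to a compact interval $[a,b]\subset(0,r_0)$ with $\mu(b)>\lambda_N(B_*)$. On this interval only finitely many indices $i$ can satisfy $\lambda_i(M_{t,t})\leq\mu(a)$, because $\lambda_i(M_{t,t})\geq\lambda_i(D)\to\infty$. Hence all but finitely many $t\in[a,b]$ satisfy $\mu(t)\notin\Spec(M_{t,t})$. Fix such a $t$, and let $\delta>0$ be the distance from $\mu(t)$ to $\Spec(M_{t,t})$.

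\textbf{Step 2: split the double eigenvalue.} Set $r=t$ and take $s<t$ close to $t$. By Proposition~\ref{prop:sverak} and the continuity of $(r,s)\mapsto M_{r,s}$, the finitely many eigenvalues of $M_{t,s}$ below $\mu(t)+\delta$ move by less than $\delta/3$. Choose $s$ so close to $t$ that also $0<\mu(s)-\mu(t)<\min(\delta/3,1/k)$. Then no eigenvalue of $M_{t,s}$ lies in $[\mu(t),\mu(s)]$. No higher disk eigenvalue lies there either, since $j_{1,1}^2/t^2>\mu(s)$. Thus $\mu(t)<\mu(s)$ are consecutive in $\Spec(A_{t,s})$, that is, $\lambda_j(A_{t,s})=\mu(t)$ and $\lambda_{j+1}(A_{t,s})=\mu(s)$ for some $j$.

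Next I check that $j\geq N$. Since $B_*\subset M_{t,s}$, monotonicity gives $\lambda_N(M_{t,s})\leq\lambda_N(B_*)<\mu(t)$. So at least $N$ eigenvalues of $A_{t,s}$ lie strictly below $\mu(t)$, and therefore $j\geq N+1$.

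\textbf{Step 3: open the doors.} By Lemma~\ref{lem:geometry}(ii), $\dhc(\Omega_{t,s,\varepsilon},A_{t,s})\to0$ as $\varepsilon\to0$. Since $A_{t,s}\neq\varnothing$, Proposition~\ref{prop:sverak} gives $\lambda_j(\Omega_{t,s,\varepsilon})\to\lambda_j(A_{t,s})$ and $\lambda_{j+1}(\Omega_{t,s,\varepsilon})\to\lambda_{j+1}(A_{t,s})$. The limiting gap is strictly between $0$ and $1/k$, so for $\varepsilon$ small the same strict inequalities hold. Take $\widetilde\Omega:=\Omega_{t,s,\varepsilon}$. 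It belongs to $\CC_\ell(D)$ by Lemma~\ref{lem:geometry}(ii), and it lies in the $\eta$-ball about $\Omega$ by Lemma~\ref{lem:geometry}(iii).

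\textbf{Main obstacle.} The delicate point is Step 1: the disk eigenvalue must avoid $\Spec(M_{t,t})$ even though $M_{t,t}$ itself moves with $t$. The opposite monotonicities resolve this. Here I must make sure that the inclusion in (iv) is used in the right direction, so that the eigenvalue branches of $M_{t,t}$ are nondecreasing while $\mu$ strictly decreases. The separation $\delta$ then has to survive the off-diagonal perturbation $s\neq t$, which is where the continuity in (iv) is needed.
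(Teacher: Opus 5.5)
Your proof is correct and follows essentially the same route as the paper. The paper also uses the opposite monotonicity of $\mu(t)$ and $\lambda_n(M_{t,t})$ to ensure $\mu(t)\notin\Spec(-\Delta_{M_{t,t}})$ outside a small exceptional set; you get a finite set on a compact interval, where the paper gets a countable one. It then uses \v{S}ver\'ak continuity in $s$ near $t$ to split the double eigenvalue into a close consecutive pair, and finally opens the doors.

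The one point to state explicitly in Step 2 is that the number of relevant indices is bounded uniformly in $s$, via $\lambda_i(M_{t,s})\geq\lambda_i(D)$. The paper does this by fixing $m$ with $\lambda_{m+1}(D)>\sup J$.
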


\begin{proof}
Choose a disk $B_*\Subset\Omega$ and apply Lemma~\ref{lem:geometry}. Write
\[
 M_t:=M_{t,t},
 \qquad
 \mu(t):=\lambda_1(B(0,t))=\frac{\lambda_1(B(0,1))}{t^2}.
\]
By Lemma~\ref{lem:geometry}(iv) and domain monotonicity, for each $n\geq1$ the function
\[
 t\longmapsto\lambda_n(M_t)
\]
is nondecreasing, whereas $t\mapsto\mu(t)$ is strictly decreasing. Consequently, for every $n$ the equation
\[
 \mu(t)=\lambda_n(M_t)
\]
has at most one solution. The set of radii $t$ for which $\mu(t)\in\Spec(-\Delta_{M_t})$ is therefore at most countable.

Moreover, $B_*\subset M_t$ for every $t$, so domain monotonicity gives
\[
 \lambda_N(M_t)\leq\lambda_N(B_*).
\]
Since $\mu(t)\to+\infty$ as $t\to0$, we may choose $t\in(0,r_0)$ outside the countable exceptional set and so small that
\begin{equation}\label{eq:mu-above-N}
 \mu(t)>\lambda_N(M_t).
\end{equation}

{Since $\mu(t)\notin\Spec(-\Delta_{M_t})$, discreteness and
\eqref{eq:mu-above-N} allow us to choose a bounded open interval
$J\ni\mu(t)$ so small that
\[
 \overline J\cap\Spec(-\Delta_{M_t})=\varnothing,\qquad
 \lambda_N(M_t)<\inf J,\qquad
 \sup J<\lambda_2(B(0,t)).
\]
Take $m$ such that $\lambda_{m+1}(D)>\sup J$. Since $M_{t,s}\subset D$,
domain monotonicity gives
\[
 \lambda_{m+1}(M_{t,s})\geq\lambda_{m+1}(D)>\sup J,
\]
so only its first $m$ eigenvalues can meet $J$. Their continuity at $s=t$,
given by Proposition~\ref{prop:sverak} and Lemma~\ref{lem:geometry}(iv),
together with scaling continuity for the disks, allows us to choose
$s\neq t$ arbitrarily close to $t$ such that, writing $I$ for the closed
interval with endpoints $\mu(t)$ and $\mu(s)$,}
\begin{equation}\label{eq:tuning-properties}
 \begin{aligned}
  &0<|\mu(s)-\mu(t)|<\frac{1}{3k},\\
  &I\cap\Spec(-\Delta_{M_{t,s}})=\varnothing,\\
  &\lambda_N(M_{t,s})<\min I,\\
  &\max I<\min\{\lambda_2(B(0,t)),\lambda_2(B(0,s))\}.
 \end{aligned}
\end{equation}
The last condition follows from the strict inequality $\lambda_2(B(0,t))>\lambda_1(B(0,t))$ and continuity under scaling.

The spectrum of the disjoint union
\[
 A_{t,s}=M_{t,s}\,\sqcup\,B(c_1,t)\,\sqcup\,B(c_2,s)
\]
is the union, with multiplicities, of the spectra of its three components. It follows from \eqref{eq:tuning-properties} that $\mu(t)$ and $\mu(s)$ are two simple consecutive eigenvalues of $A_{t,s}$. 
By \eqref{eq:mu-above-N} and \eqref{eq:tuning-properties}, there are at least $N$ eigenvalues below them. Hence there exists an index $j\geq N$ such that
\[
 \{\lambda_j(A_{t,s}),\lambda_{j+1}(A_{t,s})\}
 =\{\mu(t),\mu(s)\}.
\]

Finally, $\Omega_{t,s,\varepsilon}\to A_{t,s}$ in complementary-Hausdorff distance as $\varepsilon\to0$. By Proposition~\ref{prop:sverak},
\[
 \lambda_j(\Omega_{t,s,\varepsilon})\longrightarrow\lambda_j(A_{t,s}),
 \qquad
 \lambda_{j+1}(\Omega_{t,s,\varepsilon})\longrightarrow\lambda_{j+1}(A_{t,s}).
\]
The limiting gap is positive and smaller than $1/(3k)$. For $\varepsilon>0$ sufficiently small, the corresponding gap on $\Omega_{t,s,\varepsilon}$ therefore belongs to $(0,1/k)$. Lemma~\ref{lem:geometry} also gives
\[
 \Omega_{t,s,\varepsilon}\in\CC_\ell(D),
 \qquad
 \dhc(\Omega_{t,s,\varepsilon},\Omega)<\eta.
\]
Taking $\widetilde\Omega=\Omega_{t,s,\varepsilon}$ concludes the proof.
\end{proof}

\section{{Generic failure of uniform separation}}\label{sec:baire}

For $k,N\geq1$, define
\begin{equation}\label{eq:UkN}
 \mathcal U_{k,N}:=
 \bigcup_{j\geq N}
 \left\{\Omega\in\CC_\ell(D):
 0<\lambda_{j+1}(\Omega)-\lambda_j(\Omega)<\frac1k\right\}.
\end{equation}

{We first relate these sets, which are expressed using eigenvalues counted
with multiplicity, to the distinct spectrum. For a fixed $\Omega$, let}
\[
 {p_m:=\max\{j\geq1:\lambda_j(\Omega)=\nu_m(\Omega)\}.}
\]
{Every eigenvalue has finite multiplicity, so $p_m\to\infty$, and}
\begin{equation}\label{eq:block-boundary}
 {\nu_{m+1}(\Omega)-\nu_m(\Omega)
 =\lambda_{p_m+1}(\Omega)-\lambda_{p_m}(\Omega)>0.}
\end{equation}
{Conversely, every positive difference
$\lambda_{j+1}(\Omega)-\lambda_j(\Omega)$ occurs at one of the indices
$j=p_m$ and equals the corresponding difference in
\eqref{eq:block-boundary}. Consequently,}
\begin{equation}\label{eq:G-distinct}
 {\bigcap_{k\geq1}\bigcap_{N\geq1}\mathcal U_{k,N}
 =
 \left\{\Omega\in\CC_\ell(D):
 \inf_{m\geq1}
 \bigl(\nu_{m+1}(\Omega)-\nu_m(\Omega)\bigr)=0\right\}.}
\end{equation}

\begin{proof}[Proof of Theorem~\ref{thm:main}]
For every fixed $j$, Proposition~\ref{prop:sverak} shows that the maps
\[
 \Omega\longmapsto\lambda_j(\Omega),
 \qquad
 \Omega\longmapsto\lambda_{j+1}(\Omega)
\]
are continuous on $\CC_\ell(D)$. Both inequalities in \eqref{eq:UkN} are strict, so $\mathcal U_{k,N}$ is open. Proposition~\ref{prop:tuning} shows that it is dense.

By Proposition~\ref{prop:Gdelta}, $\CC_\ell(D)$ is a Baire space. Hence
\[
 {\bigcap_{k\geq1}\bigcap_{N\geq1}\mathcal U_{k,N}}
\]
{is a dense $G_\delta$ subset of $\CC_\ell(D)$. The identity
\eqref{eq:G-distinct} proves the theorem.}
\end{proof}

\begin{proof}[Proof of Corollary~\ref{cor:simple-refinement}]
{Theorem~\ref{thm:main} shows that $\mathcal G_\ell$ is residual, while
Proposition~\ref{prop:generic-simplicity} shows that $\mathcal S_\ell$ is
residual. Hence $\mathcal G_\ell\cap\mathcal S_\ell$ is residual. If
$\Omega$ belongs to this intersection, then
$\nu_m(\Omega)=\lambda_m(\Omega)$ for every $m$, and therefore}
\[
 {\liminf_{j\to\infty}
 \bigl(\lambda_{j+1}(\Omega)-\lambda_j(\Omega)\bigr)=0.}
\]
{Since every gap is positive, its infimum is therefore zero.}
\end{proof}

\section{{The exceptional uniform-gap problem}}\label{sec:questions}

{The preceding results leave a clean existence problem. In every space
$\CC_\ell(D)$, uniform separation of the distinct Dirichlet spectrum is
exceptional, while a generic domain has simple spectrum and arbitrarily
small positive consecutive gaps. The complementary-Hausdorff framework thus
isolates uniform separation as a genuinely nongeneric rigidity phenomenon.}

\begin{question}[The planar uniform-gap problem]\label{q:simple}
{Does there exist a bounded planar domain $\Omega\subset\R^2$ whose
Dirichlet eigenvalues, repeated according to multiplicity, satisfy}
\[
 {\inf_{j\geq1}
 \bigl(\lambda_{j+1}(\Omega)-\lambda_j(\Omega)\bigr)>0?}
\]
{Equivalently, does there exist a bounded planar domain with simple,
uniformly separated Dirichlet spectrum?}
\end{question}

{The explicitly solvable integrable polygons do not answer
Question~\ref{q:simple}. For rectangles with rational squared aspect ratio,
as well as for the equilateral, right isosceles and hemi-equilateral
triangles, the distinct eigenvalues lie in an arithmetic lattice and are
uniformly separated, but the complete spectrum has arithmetic
multiplicities \cite{MardbyRowlett2025,BerardHelffer2016}. Together with
rectangles, these three triangles exhaust the integrable polygons
\cite{MardbyRowlett2025}. Irrational rectangles have simple spectrum, but
their gaps have zero lower limit by
\cite[Proposition~2.2]{BlomerBourgainRadziwillRudnick2016}. Thus every
standard explicit candidate loses one of the two required properties.}

{Both standard conjectural models of spectral statistics likewise point
toward a negative answer. For generic quantum-integrable billiards, the
Poisson statistics predicted by Berry and Tabor imply arbitrarily small gaps
\cite{BerryTabor1977}. For chaotic billiards, the
Bohigas--Giannoni--Schmit conjecture predicts random-matrix statistics
\cite{BohigasGiannoniSchmit1984}. Although level repulsion makes very small
gaps less frequent, the predicted spacing distributions assign positive
probability to every interval $(0,\varepsilon)$. The smallest gap among the
first $N$ levels is therefore still expected to tend to zero as
$N\to\infty$.}

{The present paper turns this evidence into a rigorous Baire-category
statement. For every $D$ and $\ell$, any positive solution of
Question~\ref{q:simple} lying in $\CC_\ell(D)$ belongs to a meagre set.
Determining whether a bounded planar domain with simple, uniformly separated
Dirichlet spectrum exists at all remains a natural rigidity problem in spectral geometry.}

\section*{Acknowledgements}
The author is grateful to Jean-Michel Coron for suggesting the question, and to Emmanuel Tr\'elat for helpful discussions.
This work was supported by the ANR-Tremplin StarPDE (ANR-24-ERCS-0010).

\bibliographystyle{abbrv}
\bibliography{spectral_gap_references}

\end{document}